\newtheorem{theorem}{Theorem}[section]
\newtheorem{lemma}[theorem]{Lemma}
\newtheorem{corollary}[theorem]{Corollary}
\newtheorem{remark}[theorem]{Remark}
\numberwithin{equation}{section}
\def\Vo{\vbox{\offinterlineskip\hbox{\kern 3pt$\scriptstyle\circ$}
\kern 1pt\hbox{$V$}}}
\def\Ho{\vbox{\offinterlineskip\hbox{\kern 3pt$\scriptstyle\circ$}
\kern 1pt\hbox{$H$}}}
\def\Wo{\vbox{\offinterlineskip\hbox{\kern 3pt$\scriptstyle\circ$}
\kern 1pt\hbox{$W$}}}
\newcommand{\inj}{\hookrightarrow}
\newcommand{\bt}{\begin{theorem}}
\newcommand{\et}{\end{theorem}}
\newcommand{
  {\resizebox{}{!}{\input .pstex_t}}
}[2]{
  {\resizebox{#1}{!}{\input #2.pstex_t}}
}
\def\eqldef{\overset{\text{\tiny \rm def}}{=}}
\def\tra{\mathsf{T}}
\newcommand{\abs}[2][{}]{\lvert#2\rvert_{#1}}
\def\dd{\;\!\mathrm{d}}    
\newcommand{\Er}{\mathbb{R}}
\newcommand{\En}{\mathbb{N}}
\newcommand{\norm}[2][{}]{\lVert#2\rVert_{{#1}}}
\newcommand{\bH}{\mathbf{H}}
\newcommand{\bL}{\mathbf{L}}
\newcommand{\bfL}{\mathbf{L}}
\def\tra{\mathsf T}
\def\eqldef{\overset{\text{\tiny\rm def}}{=}}
\def\L{{\rm{L}}}
\def\H{{\rm{H}}}
\def\E{{\rm{E}}}
\def\W{{\rm{W}}}
\def\C{{\mathrm{C}}}
\def\H{{\mathrm{H}}}
\def\L{{\mathrm{L}}}
\def\W{{\mathrm{W}}}
\def\bproof{\begin{proof}}
\def\eproof{\end{proof}}
\newcommand{\bfH}{\mathbf{H}}
\newcommand{\bFormula}[1]{
\begin{equation} \label{#1}}
\newcommand{\eF}{\end{equation}}
\newcommand{\Bd}{\partial \Omega}
\def\longrightharpoonup{\relbar\joinrel\rightharpoonup}
\begin{document}

\title{Solvability for a drift-diffusion
system\\ with Robin boundary
conditions}


\author{A.~Heibig\thanks{Universit\'e de Lyon, CNRS, INSA de Lyon
Institut Camille Jordan UMR 5208, 20 Avenue A. Einstein, F--69621 Villeurbanne, France 
(\tt arnaud.heibig@insa-lyon.fr, apetrov@math.univ-lyon1.fr, christian.reichert@insa-lyon.fr)}
\and A.~Petrov$^{*}$ \and C.~Reichert$^{*}$}

\pagestyle{myheadings}
\thispagestyle{plain}
\markboth{A.~Heibig, A.~Petrov, C. Reichert}
{\small{The drift-diffusion
system with Robin boundary
conditions}}

\maketitle

\begin{abstract}
\noindent This paper focuses on a drift-diffusion system subjected to boundedly non dissipative 
Robin boundary conditions.
A general existence 
result with large initial conditions is established by using suitable \(\L^1\),
\(\L^2\) and trace estimates.
Finally, two examples coming from the corrosion and the self-gravitation model are
 analyzed.
\end{abstract}

\hspace*{-0.6cm}{\textbf {Key words.}}
Drift-diffusion system, Robin boundary conditions, 
complex interpolation, corrosion model, self-gravitation model.
\vspace{0.3cm}

\hspace*{-0.6cm}{\textbf {AMS Subject Classification.}}
35B45, 35A65, 35D30, 76R50


\section{Introduction}
\label{Intro}
The drift-diffusion equations have a vast
phenomenology and are currently studied.
When coupled with fluid flows equations, the
resulting systems are usually quite 
complex due to the micro-macro
effect. A short reference list is 
\cite{CiHePa1, CiHePa2, ConstantinMasmoudi, ConSer10GRNS, Jourdain, Lin}
for some problems arising in different contex, 
including the theory of dilute or melt 
polymers. Apart from the theory of stochastic
process -- mainly the Fokker-Planck equation --
a priviledged field of application is the
theory of semi conductors. This includes
systems of Debye type studied for instance 
in \cite{BilDol00LBND, BiHeNa94DSEL, BilNad02GEGS, FanIto95GSDD, FanIto95TDDD, 
Gaj, JunPen00HHMP}.
Let's also mention, 
in the area of chemotaxis, the Patlak-Keller-Segel system
(see  \cite{BMC, KellerSegel, Mizoguchi, Patlak},
and references therein).

In this paper, we focus on the
following problem
\begin{subequations}
\label{intro1_1}
\begin{align}
\label{intro1}
&\partial_tu=\nabla\cdot(\nabla u+
u_{\alpha} \otimes \nabla\mathcal{V}),\quad (t,x) \in(0,T)\times\Omega,\\&
\label{intro2}
u(0) = u_0,
\end{align}
\end{subequations}
with \(\Omega \subset \mathbb{R}^d\)
a smooth bounded domain, 
\(u = (u^1,\ldots, u^n)\) and  
\(u_{\alpha} = (\alpha^1u^1,\ldots, 
\alpha^nu^n)\) \((\alpha^i\in
\mathbb{R} \textrm { for } i =1,\ldots, n)\).
The potential 
\(\mathcal{V}\)
is given by 
\(\mathcal{V}(t) = \mathcal{B}(t, u(t))\)
for a.e \(t\in (0, T)\), with
\(\mathcal{B}: \mathbb{R}_+\times
{\bfL}^1(\Omega)\rightarrow 
\W^{1, \infty}(\Omega)\cap\W^{2, 1}(\Omega)\)
a suitable smoothing,
nonlinear operator.
The boundary conditions on the 
fluxes are of Robin type, which reads as follows
\begin{equation}
\label{intro3}
\Bigl(\frac{\partial u^i}{\partial \nu}+\alpha^i u^i
\frac{\partial\mathcal{V}}{\partial \nu}\Bigr)(t, x)=\sigma^i(t, x,u^i_{|_{\partial\Omega}}(t, x), 
\mathcal{V}_{|_{\partial\Omega}}(t, x)),\quad (t, x)\in[0, T]\times\partial\Omega.
\end{equation}
The fluxes \(\sigma^i\)
are endowed with boundedly
non dissipative conditions,
reminiscent of Kru\v{z}kov entropy conditions:
for all \((t,x, v, \psi)\in [0,T]\times\Bd\times\Er\times\Er\)
\begin{subequations}
\label{intro4_1}
\begin{align}
&\label{intro4}
\sigma^i(t,x,v,\psi)\chi^+(v-k^i)\leq \Lambda_T,\\&
\label{intro5}
\sigma^i(t,x,v,\psi)\chi^-(v)\leq 0,
\end{align}
\end{subequations}
where \(\chi^+\) is the Heaviside 
function and \(\chi^-(v) = -\chi^+(-v)\) and \(k^i>0\).
The goal of the paper is to prove well
posedeness of such a system in a \(\L^2\)
frame (see Theorem \ref{Thetheorem}). 
Let's mention  the close connexion of 
the above equations and the theory of the Navier-Stokes
equations as developed by \cite{ Kato, Weissler} (see also \cite{LR}). 
Nevertheless, we will not use 
this closeness in the present paper, but rather some features of the
$\L^1$ theory of Kru\v{z}kov
for scalar conservation laws. 
See \cite{Kruzkov} and compare with assumptions
\eqref{intro4_1}.

The above problem is  a compromise between
realistic equations such as the Debye 
system, and a more abstract setting.  
Notice that the usual  \(2\times 2\)  semi-conductor model 
(see \cite{BiHeNa94DSEL}) corresponds to the resolvent of the 
Poisson-Dirichlet problem, i.e 
\(\mathcal{B}(t, \cdot) = \Delta_D^{-1}\). Such a resolvent
has relatively bad smoothing properties in
a \(\L^{\infty}\) frame. But, as a  
compensation,
the system admits opposite sign on the nonlinearities 
ensuring large data global solutions.
Contrarily, this sign condition is not fulfilled for the present
system \eqref{intro1_1}, \eqref{intro3}
and we assume the above 
smoothing assumption on the operator
\(\mathcal{B}\). This assumption prevents
us  to apply our results
to the case 
\(\mathcal{B} = \Delta^{-1}_D\)
for \(d\geq 2\).
Nevertheless, due to the
special properties of the 1-D  Laplace operator, 
our results apply to the 
one dimensional Debye type system considered in 
\cite{CHAVI}, a problem we had 
primarily in view
 (see also \cite{BaBoC*12NMSC}). 
In that case, 
 our existence result
improves the former result 
in \cite{CHAVI}.
Actually, since we work in a 
\(\L^2\) frame
and remove the 
sign condition of the Debye \(2\times 2\)
system, 
we obtain
an
existence result for a general
\(n\times n\) system \((d=1)\).
 We also remove the restrictive
 conditions on the initial data in 
 \cite{CHAVI}. Finally, to conclude this
 section, note that 
 in the
 case \(\mathcal{B} = \Delta^{-1}_D\),
 \(d\geq 2\), a mollifying process
 can be used on \(\mathcal{B} \) in order
 to recover some classical results of the
 theory. We treat the simple case of the
 self gravitational system at the end of 
 the paper (Section \ref{zamples}). 

Compared with former works on
the subject (see \cite{BiHeNa94DSEL}),
the novelty of  \cite{CHAVI}
and of the present paper
lies in the Robin boundary condition
\eqref{intro3}. The issue, when dealing
with such a non dissipative condition
is to derive an
\(\L^{\infty}(0, T, \L^1(\Omega))\) estimate 
on the function \(u\) since no decrease or conservation
of \(\norm[\L^1(\Omega)]{u(t)}\) can 
be expected. Thus, the main task is to define and evaluate
the nonlinear term
\(\sigma^i(u^i_{|_{\partial\Omega}})\). 
When working in the classical setting
\(u\in  \L^2(0,t_0;\bH^1(\Omega))
\cap\C^0([0,t_0];\bL^2(\Omega))\), a simple
interpolation procedure shows that the natural
trace space for \(u\) is 
\(\L^q(0,T, \L^q(\Bd))\)
with \(1\leq q < 2+\frac2d\).
This corresponds to a 
restricted class of admissible fluxes, essentially
defined as follows. For
any \((t,x, v, \psi)\in [0,T]\times\Bd\times\Er\times\Er\) with
\(0\leq t \leq T\) and \(\abs{\psi}\leq M\)
\begin{equation}
\label{NLF}
\abs{\sigma^i(t,x,v,\psi)-\sigma^i(t,x,\bar{v},\bar{\psi})}
\leq C_{T, M}
((1{+}\abs{v}^{\rho}{+}\abs{\bar{v}}^{\rho})
\abs{v-\bar{v}}+
(1{+}\abs{v}^{\rho+1}{+}
\abs{\bar{v}}^{\rho+1})
\abs{\psi-\bar{\psi}}),
\end{equation}
with \(0\leq\rho <1+\frac2d\). Within such a class of fluxes, the 
classical existence results of \cite{Lady}
do not apply to the 
natural linearized versions of the system 
\eqref{intro1}, at least for
\(\rho\) close to \(1+\frac2d\). 
As a matter of fact, such 
fluxes leads to rather discontinuous
right
hand sides in the variational
formulations, so that getting  an existence result require an
indirect procedure and the use of all the conditions \eqref{intro4_1} and \eqref{NLF} on the flux.

The paper is organized 
as follows. The equations are described 
in Subsection \ref{Model}
 while a first
simplified set of 
constitutive assumptions is described
in  Subsection
\ref{Math_assump}. 
Essentially, we 
replace condition 
\eqref{NLF} by a global Lipschitz
condition, in order to get a tractable
proof of the existence result 
given in Section 
\ref{lasection}.
The proof of this existence result relies on the aforementioned
\(\L^{\infty}(0, T, \L^1(\Omega))\) estimate.
Since the extensions we have in view 
(Section \ref{lasection})
require some
uniform estimates, we keep
track of the constants (Lemma
 \ref{GE:Lemma3}). Some trace inequalites are 
 established in Section
 \ref{Estimates_trace}, leading to
 the definition of an extended set
 of assumptions. Under these conditions,
 a general existence theorem 
 with large initial data is established in
 Section \ref{lasection} by using some
 ad-hoc density argument.
 The final Section 
 \ref{zamples} is devoted to two 
 realistic examples. The first one deals with a drift-diffusion system with 
 Robin boundary conditions with an application to a corrosion model 
 (cf.~\cite{BaBoC*12NMSC,CHAVI}), while the second one
 is the classical equation of self-gravitation
 system studied for instance in
\cite{BilNad02GEGS}.

\section{Mathematical formulation}
\label{Math_form}

\subsection{The model}
\label{Model}
Let \(T>0\), let \(\Omega\) be a
smooth  bounded
subset of \(\mathbb{R}^d\), and let \(\alpha^i\in\Er\)
be some real given numbers
 (\(d\in 
\mathbb{N}^*, i\in \{1,\ldots, n\}\)). Let 
\(u^i(t,x)\) and \(\mathcal{V}(t)\) be scalar valued functions 
depending on time \(t\).  
Set \(u\eqldef(u^1,\ldots, u^n)\) and denote by
\(\partial_j\) the partial derivative
with respect to the \(j^{\text{th}}\) spatial
variable (\(i\in \{1,\ldots, n\}\)). The mathematical problem is formulated as follows:
\begin{subequations}
\label{eq:1}
\begin{align}
\label{eq:11}
&\forall i\in\{1,\ldots,n\}:\;
\partial_tu^i-\sum_{j=1}^d\partial_j
\big(\partial_j u^i
+\alpha^i u^i \partial_j\mathcal{V}\big)
=0,\quad (t,x) \in(0,T)\times\Omega,\\
\label{eq:12}
&
\mathcal{V}(t) = \mathcal{B}(t, u(t)) 
\quad \textrm{ for a.e }\quad t \in(0,T).
\end{align}
\end{subequations}
The operator \(\mathcal{B}\)
as well as the fluxes \(\sigma^i\) in equation 
\eqref{eq:13_1} below will be precised in the next subsection. 

We now turn to define the boundary
conditions. In the sequel
we
denote by \(\frac{\partial}{\partial \nu}\) the derivative with respect to the
outward normal to \(\Bd\).
The trace of \(u(t)\) on \(\Bd\)
is denoted
by \(u_{|_{\partial\Omega}}(t)\), or more 
often and abusively, by \(u(t)\).
The Robin boundary conditions for \(u^i\) 
are prescribed by
\begin{equation}
\label{eq:13_1}
\forall i\in\{1,\ldots,n\}:\,
\Bigl{(}\frac{\partial u^i}{\partial \nu}+\alpha^i u^i
\frac{\partial\mathcal{V}}{\partial \nu}\Bigr{)}(t, x)=
\sigma^i(t, x,u^i_{|_{\partial\Omega}}, 
\mathcal{V}_{|_{\partial\Omega}}),\quad (t, x)
\in[0, T]\times\partial\Omega,
\end{equation}
and our problem is completed by the following initial conditions:
\begin{equation}
\label{eq:14}
\forall i\in\{1,\ldots,n\}:\; u^i(0)=u_0^i.
\end{equation}
In the sequel, we will often use notations
such as \(u\eqldef(u^1,\ldots,u^n)\) or \(\sigma(t, x, u(t, x), \mathcal{V}(t,x))
\eqldef(\sigma^1\big(t, x, u^1(t, x), \mathcal{V}(t,x)),\ldots,\sigma^n
\big(t, x, u^n(t, x), \mathcal{V}(t,x)))\)
without any comments. We will also write \(u_{\alpha}\) in place of the column
vector with components
\(\alpha^i u^i\)
and \(\nabla\mathcal{V}\) in place of the 
row vector with components \(\partial_j\mathcal{V}\) 
\((i\in\{1,\ldots,n\})\). 
With this last notation,  
equations \eqref{eq:11} can be written in 
the more compact form
\begin{equation}
\label{eq:011}
\partial_tu=\nabla\cdot(
u_{\alpha} \otimes \nabla\mathcal{V}+\nabla u),\quad (t,x) \in(0,T)\times\Omega.
\end{equation}
Thus, we introduce the following notation, used throughout this paper: 
if \(X\) is a space of scalar functions, the bold-face notation 
\(\bf{X}\) denotes the space \(X^n\). Define the following sets:
\begin{equation*}
\forall t\in (0,T]:\; \mathcal{Q}_t\eqldef (0,t)\times\Omega
\quad\text{and}\quad\Gamma_t\eqldef (0,t)\times\partial\Omega.
\end{equation*}
In this paper, equation \eqref{eq:011} will 
often be considered in the following variational sense.
Let \(T>0\) and let \(u_0\in\bfL^2(\Omega)\).
Then for \(t_0\in]0,T]\),  the problem \((\mathcal{P}_{t_0})\) is
\begin{equation*}
(\mathcal{P}_{t_0})\hspace{2em}
\begin{cases}
\text{Find }u\in\L^2(0,t_0;{\bf{H}}^1(\Omega))
\cap\C^0([0,t_0];{\bf{L}}^2(\Omega)) 
\text{ with }
\frac{\mathrm{d}u}{\mathrm{dt}}\in\L^2(0,t_0;({\bf{H}}^1(\Omega))')\\
\text{ such that } u(0) = u_0 \text{ and for any } 
w\in \L^2(0, t_0,{\bf{H}}^1(\Omega)):\\ 
\displaystyle{ 
\int_0^{t_0}\Bigl{\langle}\frac{\dd u}{\dd t}(\tau),
w(\tau)\Bigr{\rangle}\dd\tau
+\int_{{\mathcal{Q}}_{t_0}} (\nabla u+
u_{\alpha}\otimes\nabla\mathcal{V})
(\tau, x):\nabla w(\tau, x)\dd x\dd\tau}
\\= 
\displaystyle{\int_{\Gamma_{t_0}}
\sigma(\tau, x, u(\tau, x),
\mathcal{V}(\tau, x))\cdot w(\tau, x)
\dd\mu \dd\tau,}
\end{cases}
\end{equation*}
with \(\mathcal{V}(t)\eqldef 
\mathcal{B}(t, u(t))\) for a.e 
\(t\in(0, t_0)\).\\

In the above, and
throughout this paper, notations 
such as 
\((\nabla v+v_{\alpha}\otimes\nabla\gamma):\nabla w\) stands for 
\(\sum_{i,j}(\partial_jv^i+
\alpha^iv^i\partial_j\gamma)
\partial_jw^i\), and the dot usually denotes
the canonical scalar product in \(\mathbb{R}^n\).
Notation \((\bH^1(\Omega))'\) 
denotes the topological dual of
\({\bf{H}}^1(\Omega)\). We always abridge
the notation
\(\langle\cdot, \cdot \rangle
_{(\bH^1(\Omega))', \bH^1(\Omega)}\)
in 
\(\langle\cdot, \cdot \rangle\).
Last, notation \(\mu\) or \(\dd\mu\) stands for the usual measure
on \(\Bd\). 


\subsection{The simplified case}
\label{Math_assump}

In this section, we introduce some assumptions on the constitutive
functions of the problem
and give a few simple consequences of these assumptions. 
The assumption (A--2) 
will be relaxed at the end 
of the paper by using a density argument.  

\begin{enumerate}[(\text{A}--1)]

\item The operator \(\mathcal{B}: \mathbb{R}_+\times
{\bfL}^1(\Omega)\rightarrow 
\W^{1, \infty}(\Omega)\cap\W^{2, 1}(\Omega)\)
is, locally uniformely in \(t\), Lipschitz continuous with respect 
to the second variable, i.e, for any \((v, w)\in 
\bL^1(\Omega)\times \bL^1(\Omega)\), and almost
every \(t\in[0, T]\), we have
\begin{subequations}
\begin{align}
\label{Lip1}
 &
 \norm[\W^{1, \infty}(\Omega)\cap
 \W^{2, 1}(\Omega)]{\mathcal{B}(t,0)}\leq C_T,\\&
\label{Lip2}
 \norm[\W^{1, \infty}(\Omega)\cap
 \W^{2, 1}(\Omega)]{\mathcal{B}(t,v)-\mathcal{B}(t,w)}
 \leq C_T 
 \norm[\L^1(\Omega)]{v-w}.
\end{align}
\end{subequations}

\item The fluxes \(\sigma^i:  [0,\infty)\times\Bd\times \Er\times\Er
\rightarrow\Er\) are measurable, locally bounded functions and satisfy
\begin{equation}
\label{eq:15}
\begin{aligned}
&
\forall M>0,\quad\exists K_M>0: \\
&\forall (t,x)\in[0,M]\times\Bd,\quad
\forall (v,\psi)\in  \Er\times[-M,M], \quad\forall 
(\bar{v},\bar{\psi})\in \Er\times[-M,M]:\\&
\abs{\sigma^i(t,x,v,\psi)-\sigma^i(t,x,\bar{v},\bar{\psi})}
\leq K_M
(
{\abs{v-\bar{v}}}+
\abs{\psi-\bar{\psi}}).
\end{aligned}
\end{equation}

\item The fluxes \(\sigma^i\) are boundedly non dissipative (at height \(k^i\)) in the following sense:
\begin{subequations}
\label{eq:16}
\begin{align}
&\notag\exists \Lambda_T>0,\quad\forall i\in\{1,\ldots,n\},\quad\exists k^i>0:\\&
\label{eq:15_1}
\forall (t,x, v, \psi)\in [0,T]\times\Bd\times\Er\times\Er:\;
\sigma^i(t,x,v,\psi)\chi^+(v-k^i)\leq \Lambda_T,\\&
\label{eq:15_2}
\forall (t,x, v, \psi)\in [0,T]\times\Bd\times\Er\times\Er:
\sigma^i(t,x,v,\psi)\chi^-(v)\leq 0,
\end{align}
\end{subequations}
where \(\chi^+: \Er\rightarrow \Er\) and \(\chi^-:\Er\rightarrow\Er\) are defined by
\begin{equation*}
\chi^+(x)\eqldef
\begin{cases}
1\quad\text{if}\quad x>0\\
0\quad\text{if}\quad x\leq 0
\end{cases}
\text{and}\quad \chi^-(x)\eqldef -\chi^+(-x).
\end{equation*}
\end{enumerate}
Let us make a few comments about these assumptions. 
Notice first that we could replace the assumption (A--1)
on the operator \(\mathcal{B}\) by the following lemma, which is practically 
all what we need in the sequel. In this lemma, and throughout this paper, 
\(\Vert \mathcal{B} \Vert\) denotes the (best) constant 
 $C_T$ in
 \eqref{Lip1} and \eqref{Lip2}.
\begin{lemma}
\label{Lm4} Let \(T>0\). Assume that \({\emph{(A--1)}}\) holds.
Let \(u\) and \(\bar{u}\) belongs
to \(\L^2(0, T;\bH^1(\Omega))\).
Set \(\mathcal{V}(t) \eqldef\mathcal{B}(t, u(t))\)
and 
\(\bar{\mathcal{V}}(t)\eqldef
\mathcal{B}(t, \bar{u}(t))\)
for almost every 
\(t \in (0,T)\). Let \(s\geq 1\).
Then, for a.e.
 \(t\in [0, T]\)
\begin{subequations}
\label{eq:Loc7}
\begin{align}
\label{eq:Loc7_1}
&\norm[\W^{1, \infty}(\Omega)]{\mathcal{V}(t)}
+
\norm[\W^{2, 1}(\Omega)]{\mathcal{V}(t)}
\leq C
 \norm[\bL^{s}(\Omega)]{u(t)}+C ,
\\
\label{eq:Loc7_2}&
\norm[\W^{1, \infty}(\Omega)\cap\W^{2,1}(\Omega)]{(\mathcal{V}-\bar{\mathcal{V}})(t)}
 \leq C
 \norm[\bL^{s}(\Omega)]{(u-\bar{u})(t)},
\end{align}
\end{subequations}
with \(C\eqldef C(T, \Vert \mathcal{B}\Vert)\).
\end{lemma}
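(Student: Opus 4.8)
The plan is to derive both estimates as an immediate consequence of the Lipschitz bounds \eqref{Lip1}--\eqref{Lip2} on \(\mathcal{B}\), combined only with the continuous embedding of \(\bL^s(\Omega)\) into \(\bL^1(\Omega)\) on the bounded domain \(\Omega\). First I would check that \(\mathcal{V}(t)\) and \(\bar{\mathcal{V}}(t)\) are well defined for a.e.\ \(t\): since \(u,\bar u\in\L^2(0,T;\bH^1(\Omega))\), for a.e.\ \(t\) one has \(u(t),\bar u(t)\in\bH^1(\Omega)\inj\bL^1(\Omega)\), the latter embedding holding because \(\Omega\) is bounded, so that \(\mathcal{B}(t,u(t))\) and \(\mathcal{B}(t,\bar u(t))\) make sense as elements of \(\W^{1,\infty}(\Omega)\cap\W^{2,1}(\Omega)\).

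The key elementary ingredient is the following: for \(s\geq1\) and any \(w\in\bL^s(\Omega)\), H\"older's inequality on the bounded set \(\Omega\) gives \(\norm[\L^1(\Omega)]{w^k}\leq |\Omega|^{1-1/s}\norm[\L^s(\Omega)]{w^k}\) for each component, whence \(\norm[\bL^1(\Omega)]{w}\leq C(\Omega,s,n)\norm[\bL^s(\Omega)]{w}\). When \(\norm[\bL^s(\Omega)]{w}=+\infty\) the inequalities below hold trivially, so the exceptional null set in \(t\) requires no further attention.

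For \eqref{eq:Loc7_2} I would simply write \((\mathcal{V}-\bar{\mathcal{V}})(t)=\mathcal{B}(t,u(t))-\mathcal{B}(t,\bar u(t))\), bound its intersection norm by \eqref{Lip2}, and then apply the embedding above to \(w=(u-\bar u)(t)\), obtaining the stated inequality with \(C=C_T\,C(\Omega,s,n)\). For \eqref{eq:Loc7_1} I would decompose \(\mathcal{V}(t)=\bigl(\mathcal{B}(t,u(t))-\mathcal{B}(t,0)\bigr)+\mathcal{B}(t,0)\), estimate the first bracket by \eqref{Lip2} applied to \(u(t)\) and \(0\), the second term by \eqref{Lip1}, and conclude by the triangle inequality and the embedding; here one notes that the left-hand side \(\norm[\W^{1,\infty}(\Omega)]{\mathcal{V}(t)}+\norm[\W^{2,1}(\Omega)]{\mathcal{V}(t)}\) is precisely (or at worst equivalent, up to a factor \(2\), to) the intersection norm \(\norm[\W^{1,\infty}(\Omega)\cap\W^{2,1}(\Omega)]{\mathcal{V}(t)}\).

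There is no genuine obstacle: the statement is essentially a reformulation of \({\emph{(A--1)}}\) through the embedding \(\bL^s(\Omega)\inj\bL^1(\Omega)\). The only points needing a little care are the bookkeeping of constants --- checking that the final constant depends only on \(T\) and \(\Vert\mathcal{B}\Vert\) once \(\Omega\), \(n\) and \(s\) are regarded as fixed data --- and the a.e.-in-\(t\) nature of the claim, which is handled by the ``trivial-when-infinite'' remark above together with the measurability of \(t\mapsto u(t)\).
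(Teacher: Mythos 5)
Your proof is correct and is precisely the argument the authors intend: the paper states Lemma \ref{Lm4} without proof as an immediate consequence of (A--1), and your combination of the Lipschitz bounds \eqref{Lip1}--\eqref{Lip2} with the H\"older embedding \(\bL^s(\Omega)\inj\bL^1(\Omega)\) on the bounded domain is exactly that argument. The attention to the a.e.-in-\(t\) well-definedness and to the trivial case \(\norm[\bL^s(\Omega)]{u(t)}=+\infty\) is appropriate and complete.
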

Lemma \ref{Lm4} will be mostly used with 
\(s = 2\), but the case \(s= 1\) will
be required when proving a uniform \(\L^{\infty}\)
bound on a family of potential function \(\{\mathcal{V}_p\}_{p\in
\mathbb{N}^*}\). 

Note also that in assumption 
(A--2) we solely demand the local Lipschitz continuity with respect to the \(\psi\) variable, 
in contrast with the global Lipschitz continuity with respect to the \(v\) variable. 
This stems from the fact that in the sequel, the functions \(u^i\)
may not be bounded while we will always 
have
\(\mathcal{V}_{|_{\partial\Omega}}
\in \L^{\infty}(0, T, \L^{\infty}(\Bd))\), due to the regularizing effect
of \(\mathcal{B}\)
(see (A--1)).

In assumption (A--3), we have written the bounded 
non-dissipative conditions at the height \(k^i\). This 
will provide suitable a priori estimates
on \(u^i\) since for  lower
values of \(u^i\), 
we directly have the upper bound
\(u^i \leq k^i\). This upper bound will be completed
by the usual lower bound \(u^i\geq 0\).


\subsection{Global existence: the simplified case}
\label{global_exists}

The aim of this subsection consists in showing a global well-posedness for problem
\((\mathcal{P}_T)\) under the simplified set of assumptions (A--1)--(A--3)
(see Corollary~\ref{GE:Cor1}).

The following local existence theorem
can be proved by using a linearized
existence theorem, trace lemmas and the
Picard fixed point theorem. It's proof
is omitted,  since in the simplified case
(i.e under assumption (A--2)), trace 
terms are easy to handle.
\begin{theorem}
\label{Cor6}
Assume that {\emph{(A--1)}} and
{\emph{(A--2)}} hold. Assume that \(u_0\in\bL^2(\Omega)\), and let \(T>0\) be fixed.
Then for \(t_0\in]0,T]\) small enough, the problem
\emph{(\(\mathcal{P}_{t_0}\))}
admits exactly one solution.
Moreover, the time existence
is a nondecreasing function of
\(\norm[\bfL^2(\Omega)]{u_0}\) only.
\end{theorem}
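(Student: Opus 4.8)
The plan is to construct the solution by a Picard (Banach) fixed-point argument applied to a suitable linearization, following exactly the three announced ingredients: a linearized existence result, trace lemmas, and a contraction estimate. First I would freeze the nonlinearities. Given $\bar{u}$ in the solution space $E_{t_0}\eqldef\{v\in\L^2(0,t_0;\bH^1(\Omega))\cap\C^0([0,t_0];\bL^2(\Omega)):\frac{\dd v}{\dd t}\in\L^2(0,t_0;(\bH^1(\Omega))')\}$, set $\bar{\mathcal{V}}(t)\eqldef\mathcal{B}(t,\bar{u}(t))$ and consider the linear problem: find $u$ with $u(0)=u_0$ such that, for every admissible $w$,
\[
\int_0^{t_0}\Big\langle\frac{\dd u}{\dd t},w\Big\rangle\,\dd\tau+\int_{\mathcal{Q}_{t_0}}(\nabla u+u_\alpha\otimes\nabla\bar{\mathcal{V}}):\nabla w\,\dd x\dd\tau=\int_{\Gamma_{t_0}}\sigma(\tau,x,\bar{u},\bar{\mathcal{V}})\cdot w\,\dd\mu\dd\tau.
\]
Here $u$ still enters linearly through the drift term $u_\alpha\otimes\nabla\bar{\mathcal{V}}$, while the flux $\sigma(\cdot,\bar{u},\bar{\mathcal{V}})$ has become a datum. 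Writing $\mathcal{T}:\bar{u}\mapsto u$ for the resulting solution map, a fixed point of $\mathcal{T}$ is precisely a solution of $(\mathcal{P}_{t_0})$.

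The first step is to solve this linear problem. By Lemma \ref{Lm4} (with $s=2$) one has $\bar{\mathcal{V}}\in\L^\infty(0,t_0;\W^{1,\infty}(\Omega))$, so the bilinear form $a(t;u,w)\eqldef\int_\Omega(\nabla u+u_\alpha\otimes\nabla\bar{\mathcal{V}}):\nabla w\,\dd x$ is bounded on $\bH^1\times\bH^1$ and, after absorbing the drift contribution by Young's inequality, coercive up to a lower-order term, $a(t;u,u)\geq\tfrac12\|\nabla u\|^2-C\|u\|^2$. The right-hand side defines an element of $\L^2(0,t_0;(\bH^1(\Omega))')$: indeed (A--2), with its global Lipschitz control in $v$ and the boundedness of $\bar{\mathcal{V}}$ from Lemma \ref{Lm4}, yields $\sigma(\cdot,\bar{u},\bar{\mathcal{V}})\in\L^2(\Gamma_{t_0})$ once $\bar{u}$ has an $\L^2(\Gamma_{t_0})$ trace, which the trace theorem supplies from $\bar{u}\in\L^2(0,t_0;\bH^1(\Omega))$. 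The standard theory of linear parabolic equations (via a Galerkin approximation) then gives a unique $u\in E_{t_0}$, so $\mathcal{T}$ is well defined.

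Next I would show that, for $t_0$ small, $\mathcal{T}$ maps a ball $B_R\subset E_{t_0}$ into itself and is a contraction in the weaker norm of $\C^0([0,t_0];\bL^2(\Omega))$, the usual two-norm device. For the self-mapping, I test with $w=u$: the energy identity expresses $\tfrac12\frac{d}{dt}\|u\|_{\bL^2}^2+\|\nabla u\|_{\bL^2}^2$ as a drift term bounded through $\|\nabla\bar{\mathcal{V}}\|_{\L^\infty}$ and a boundary term $\int_{\Bd}\sigma(\bar{u},\bar{\mathcal{V}})\cdot u$. The boundary term is handled by the trace interpolation inequality $\|u\|_{\L^2(\Bd)}\leq\eta\|u\|_{\bH^1}+C_\eta\|u\|_{\bL^2}$, whose $\bH^1$ part is absorbed into $\|\nabla u\|^2$; Gronwall then bounds $\sup_t\|u(t)\|_{\bL^2}^2+\int_0^{t_0}\|\nabla u\|^2$ by a quantity that, for $t_0$ small, remains below $R$ whenever $\bar{u}\in B_R$, with $R$ and the admissible $t_0$ depending only on $\|u_0\|_{\bL^2(\Omega)}$. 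For the contraction, the same estimate is applied to $w=u_1-u_2=\mathcal{T}\bar{u}_1-\mathcal{T}\bar{u}_2$; the differences of drift and flux produce, via the Lipschitz bound \eqref{eq:Loc7_2} of Lemma \ref{Lm4} together with (A--2), terms controlled by $\|\bar{u}_1-\bar{u}_2\|$ times a positive power of $t_0$, yielding a contraction factor $\theta<1$ for $t_0$ small.

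Finally, the Banach fixed-point theorem furnishes the unique fixed point, hence the unique solution of $(\mathcal{P}_{t_0})$; since both the ball radius and the smallness threshold for $t_0$ depend only on $\|u_0\|_{\bL^2(\Omega)}$, the existence time is a nondecreasing function of that norm alone. The step I expect to be most delicate is the boundary flux: checking that $\int_{\Gamma_{t_0}}\sigma\cdot w$ is a genuine $(\bH^1(\Omega))'$ datum for the linear problem and that it can be absorbed using the trace interpolation inequality. As the authors note, under the simplified assumption (A--2) this is comparatively benign; the real care lies in tracking the constants so that the self-mapping and contraction thresholds depend on $\|u_0\|_{\bL^2(\Omega)}$ only.
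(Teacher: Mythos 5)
Your proposal follows exactly the route the paper sketches for this theorem (the paper omits the details): freeze \(\bar{u}\) in the potential and in the boundary flux, solve the resulting linear parabolic problem, and run a Picard iteration on a ball of \(E_{t_0}\), with the trace interpolation inequality \(\norm[\L^2(\Bd)]{v}\leq\eta\norm[\H^1(\Omega)]{v}+C_\eta\norm[\L^2(\Omega)]{v}\) handling the boundary terms --- which, as the authors note, is benign under the global Lipschitz hypothesis (A--2).

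One detail as written does not close: the contraction cannot be carried out purely in the weak norm \(\C^0([0,t_0];\bL^2(\Omega))\). The boundary term in the difference estimate involves \(\int_{\Gamma_{t_0}}\abs{\sigma(\bar{u}_1,\bar{\mathcal{V}}_1)-\sigma(\bar{u}_2,\bar{\mathcal{V}}_2)}\,\abs{u_1-u_2}\), and by (A--2) this requires controlling the trace \(\norm[\L^2(\Gamma_{t_0})]{\bar{u}_1-\bar{u}_2}\), which is not dominated by the \(\C^0\bL^2\) norm of \(\bar{u}_1-\bar{u}_2\) alone --- a trace needs spatial regularity strictly above \(\H^{1/2}\). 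The standard repair is to contract in the full norm of \(\L^\infty(0,t_0;\bL^2(\Omega))\cap\L^2(0,t_0;\bH^1(\Omega))\): apply the \(\eta\)-trace interpolation to \(\bar{u}_1-\bar{u}_2\) as well, so the gradient contribution of the input difference enters with an arbitrarily small coefficient \(\eta K_M\), while the remaining \(\L^2(\mathcal{Q}_{t_0})\) contributions carry a factor \(t_0^{1/2}\); choosing first \(\eta\) and then \(t_0\) small yields a contraction factor below one. With that adjustment the argument is complete and the dependence of the existence time on \(\norm[\bfL^2(\Omega)]{u_0}\) only is preserved.
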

We now proceed with the proof of global
existence. Let  \(\chi^+\) be the Heaviside 
function and \(\chi^-(v) = -\chi^+(-v)\). Let 
\(g\in\C^{\infty}(\Er)\) be an increasing function 
such that 
\begin{equation*}
g(x)\eqldef
\begin{cases}
0\quad\text{if}\quad x\leq0,\\
1\quad\text{if}\quad x\geq 1.
\end{cases}
\end{equation*}
For any \(x\in\mathbb{R}\), let us define \(\chi_{\varepsilon}^+(x)\eqldef g\bigl(
x/\epsilon\bigr)\),
\(\chi_{\varepsilon}^-(x)\eqldef -\chi_{\varepsilon}^+(-x)\)
and 
\( (x)^{\pm}_{\varepsilon}\eqldef
\int_0^x\chi_{\varepsilon}^{\pm}(s)\dd s\).
The following simple lemma collects some
useful properties of these functions.

\begin{lemma}
\label{GE:Lemma1}
\begin{enumerate}[(i)]

\item Let \(U\subset\Er^m\) \((m\in\En^*)\). 
For any \(\varepsilon>0\)
\begin{subequations}
\label{eq:GE2}
\begin{align}
&\label{eq:GE2_4}
0\leq \chi^{\pm}_{\varepsilon}
\chi^{\pm}\leq 1\quad\text{and}\quad
\chi^{\pm}_{\varepsilon}
\chi^{\pm 2}
=\chi^{\pm}_{\varepsilon},\\&
\label{eq:GE2_3}
\forall (f,w)\in\L^1(U)\times\L^1(U):\;
\chi^{\pm}_{\varepsilon}
(w)f
\underset{\varepsilon\rightarrow 0}{\longrightarrow} 
\chi^{\pm}
(w)f\quad\text{in}\quad\L^1(U).
\end{align}
\end{subequations}

\item Let \(T>0\), \(z\in\mathbb{R}\), 
\(j\in\{1,\ldots, n\}\),
\(\varepsilon> 0\),
\(\phi\in\L^2(0, T, \H^1(\Omega))\), 
\(h\in \L^{\infty}((0, T)\times \Omega)\).
Then 
\begin{equation}
\label{eq:GE2_5}
\begin{aligned}
\int_{\Gamma_T}
(\phi - z ) h \partial_j\big(\chi^{\pm}_{\varepsilon}
(\phi-z)\big)\dd x\dd\tau
\underset{\varepsilon\rightarrow 0}{\longrightarrow} 0.
\end{aligned}
\end{equation}
\end{enumerate}
\end{lemma}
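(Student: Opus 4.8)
The plan is to dispatch part \emph{(i)} by direct inspection of the definitions and to reserve the genuine work for part \emph{(ii)}, where a dominated‑convergence argument is needed once the singular factor is correctly paired off.

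For the pointwise identities in \eqref{eq:GE2_4} I would argue from the shape of $g$. Since $g$ is increasing with values in $[0,1]$ and vanishes on $(-\infty,0]$, the function $\chi^+_\varepsilon(x)=g(x/\varepsilon)$ satisfies $0\le\chi^+_\varepsilon\le1$ and $\chi^+_\varepsilon\equiv0$ on $(-\infty,0]$; symmetrically $\chi^-_\varepsilon$ takes values in $[-1,0]$ and vanishes on $[0,\infty)$. Comparing supports with the Heaviside functions (so that $\chi^+=\mathbf 1_{\{x>0\}}$ and $\chi^-=-\mathbf 1_{\{x<0\}}$) shows that on each relevant half–line $\chi^\pm_\varepsilon\chi^\pm$ reduces to $\abs{\chi^\pm_\varepsilon}$, whence $0\le\chi^\pm_\varepsilon\chi^\pm\le1$. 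Since $\chi^{\pm2}\eqldef(\chi^\pm)^2$ is exactly the indicator of the half–line $\{\pm x>0\}$ carrying the support of $\chi^\pm_\varepsilon$, the same bookkeeping yields $\chi^\pm_\varepsilon\chi^{\pm2}=\chi^\pm_\varepsilon$.

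For the $\L^1$ convergence \eqref{eq:GE2_3} I would combine a pointwise limit with domination. For every $x$ one has $\chi^+_\varepsilon(w(x))\to\chi^+(w(x))$: if $w(x)>0$ then $w(x)/\varepsilon\to+\infty$ and $g(w(x)/\varepsilon)\to1$, while if $w(x)\le0$ the argument is nonpositive and $g(w(x)/\varepsilon)=0$, and the minus case is symmetric. Together with the bound $\abs{\chi^\pm_\varepsilon(w)f-\chi^\pm(w)f}\le2\abs{f}\in\L^1(U)$, dominated convergence gives the claimed limit.

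Part \emph{(ii)} is the real point. I would first use the chain rule, $\partial_j\bigl(\chi^\pm_\varepsilon(\phi-z)\bigr)=(\chi^\pm_\varepsilon)'(\phi-z)\,\partial_j\phi$, so the integrand becomes $h\,(\phi-z)(\chi^\pm_\varepsilon)'(\phi-z)\,\partial_j\phi$, the integral being read over the space–time cylinder as the volume element $\dd x\,\dd\tau$ indicates. The crucial observation is that $(\chi^+_\varepsilon)'(s)=\varepsilon^{-1}g'(s/\varepsilon)$ is supported in $s\in[0,\varepsilon]$ (resp.\ $[-\varepsilon,0]$ for the minus sign), so on its support $\abs{\phi-z}\le\varepsilon$ and the prefactor $(\phi-z)$ exactly cancels the $\varepsilon^{-1}$ blow‑up:
\[
\bigl|(\phi-z)(\chi^\pm_\varepsilon)'(\phi-z)\bigr|\le\norm[\infty]{g'}\,\mathbf 1_{\{\abs{\phi-z}\le\varepsilon\}}.
\]
Hence the integrand is dominated by $\norm[\infty]{g'}\,\norm[\infty]{h}\,\abs{\partial_j\phi}\in\L^1$, since $\phi\in\L^2(0,T;\H^1(\Omega))$ and the cylinder has finite measure. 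As $\varepsilon\downarrow0$ the indicator $\mathbf 1_{\{\abs{\phi-z}\le\varepsilon\}}$ decreases to $\mathbf 1_{\{\phi=z\}}$, and $\partial_j\phi=0$ almost everywhere on the level set $\{\phi=z\}$ (the gradient of an $\H^1$ function vanishes a.e.\ on each of its level sets); therefore the integrand tends to $0$ almost everywhere, and dominated convergence gives \eqref{eq:GE2_5}.

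The only delicate step is this last one: because $(\chi^\pm_\varepsilon)'$ concentrates like $\varepsilon^{-1}$ near the origin, any crude estimate diverges, and the argument hinges on pairing that singular factor with the vanishing factor $(\phi-z)$ on the shrinking support, together with the a.e.\ vanishing of $\partial_j\phi$ on $\{\phi=z\}$. Everything else is routine verification of the definitions and standard dominated convergence.
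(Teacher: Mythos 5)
Your proof is correct and follows essentially the same route as the paper's: chain rule, cancellation of the $\varepsilon^{-1}$ blow-up of $(\chi^{\pm}_{\varepsilon})'$ by the factor $(\phi-z)$ on its shrinking support, then dominated convergence (the paper only writes out part \emph{(ii)} and treats part \emph{(i)} as routine, as you do). The one cosmetic difference is that the paper keeps the one-sided indicator $\mathds{1}_{0<\phi-z\leq\varepsilon}$, which tends to zero pointwise everywhere, and so never needs the fact that $\partial_j\phi$ vanishes a.e.\ on the level set $\{\phi=z\}$, which your two-sided indicator $\mathds{1}_{\vert\phi-z\vert\leq\varepsilon}$ obliges you to invoke; both variants are valid.
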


\begin{proof}
We only prove \emph{(ii)} for 
\(\chi_{\varepsilon}^{+}\). Let us introduce the following notation:
\(\mathcal{I}_{\varepsilon}\eqldef
\vert
\int_{{\mathcal{Q}}_T} 
(\phi - z ) h \partial_j(\chi_{\varepsilon}^{+}
\circ(\phi-z))\dd x\dd \tau
\vert\). 
Since the support of \(\chi_{\varepsilon}^{+}\) 
is included in \([0, \varepsilon]\), 
and since 
\(\vert \big(\chi_{\varepsilon}^{+}\big)'\vert \leq \frac{C}{\varepsilon}\) we readily obtain 
\begin{equation*}
\mathcal{I}_{\varepsilon}
\leq \int_{[0, T]\times \Omega}
\mathds{1}_{0<\phi-z\leq\varepsilon}
\vert \varepsilon (C/\varepsilon)
h \partial_j
(\phi-z)
 \vert 
\dd x\dd\tau
\end{equation*}
where \( \mathds{1}_{0<\phi-z\leq\varepsilon}\)denotes the indicator function of the
 set \(0<\phi-z\leq\varepsilon\).
By dominated convergence, this last integral
tends to zero with \(\varepsilon\). In fact, 
\(\vert h \partial_j
(\phi-z)\mathds{1}_{0<\phi-z\leq\varepsilon}\vert\leq
\vert h \partial_j
(\phi-z)\vert\in\L^1([0, T]\times \Omega)\)
and 
\(h\partial_j
(\phi-z)\mathds{1}_{0<\phi-z\leq\varepsilon}
\underset{\varepsilon\rightarrow 0}{\longrightarrow} 0\) a.e.
\end{proof}
We now prove some global in time \(\L^1\) and
\(\L^2\) estimates for the solutions
of the problem (\(\mathcal{P}_{T}\)). In the following statement, our main assumptions
are conditions (A--1) and  (A--3). Since the composition operators
have to be well defined, we also assume that
the assumption (A--2) also holds
true. Nevertheless, notice that 
the  estimates of Lemma 
\(\ref{GE:Lemma2}\) do not depend
on the constants \(K_M\) of continuity of the
functions \(\sigma^i\), a fact that will
be used in the next section.
In the sequel, for \(f=(f_1,\ldots,f_N)
\in \bfL^1(U)\), we write 
\(\norm[\bfL^1(U)]{f} = 
\sum_{k=1}^{N}\norm[\L^1(U)]{f_k}\).

\begin{lemma}
\label{GE:Lemma2} 
Assume that {\emph{(A--1)--(A--3)}} hold, and 
assume that \(u_0\in\bfL^2(\Omega)\). Let 
\(T>0\) be given, and let 
\(u\) be any solution to problem 
{\emph{(\(\mathcal{P}_{T}\))}}.
\begin{enumerate}[(i)]

\item Then, for any \(t\in[0,T]\), we have
\begin{subequations}
\begin{align}
\label{eq:GE4_2}
&\norm[\L^1(\Omega)]{(u^i)^-(t)}\leq
\norm[\L^1(\Omega)]{(u^i_0)^-(t)}\\
&\label{eq:GE4_1}
\norm[\L^1(\Omega)]{(u^i-k^i)^+(t)}
\leq
\norm[\L^1(\Omega)]{(u_0^i-k^i)^+}
-\int_{\Gamma_t}
k^i\alpha_i\chi^+(u^i-k^i)\nabla\mathcal{V}\cdot \nu
\dd\mu\dd\tau\\\nonumber&
+\int_{{\mathcal{Q}}_t}  k^i\alpha_i
\chi^+(u^i-k^i)
\Delta\mathcal{V}\dd x\dd\tau+
\mu(\Bd)\Lambda_T
t.
\end{align}
\end{subequations}

\item Assume moreover that
\(u^i\geq 0\), \(i\in\{1,\ldots,n\}\).
Then, we have
\begin{equation}
\label{eq:GE4_3}
\begin{aligned}
&\frac12\norm[\L^2(\Omega)]{u^i(t)}^2
+\norm[\L^2({{\mathcal{Q}}_t})]
{\nabla u^i}^2
\leq
\frac12\norm[\L^2(\Omega)]{u^i_0}^2
-\alpha_i\int_{{\mathcal{Q}}_t} 
u^i
\nabla\mathcal{V}\cdot
\nabla u^i \dd x\dd\tau\\&
+(\Lambda_T + \sup_{A_i(T, 
\norm[\L^{\infty}(0, T, 
\L^{\infty}(\Bd))]{\mathcal{V}})}
\vert \sigma^i \vert )
\int_{\Gamma_t} u^i \dd\mu \dd\tau,
\end{aligned}
\end{equation}
with, for any \(Z\in \mathbb{R}_+\), 
\(A_i(T,  Z)
\eqldef [0, T]\times \Bd \times [0, k^i] \times [-Z,Z]\).
\end{enumerate}
\end{lemma}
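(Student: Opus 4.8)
The plan is to test the variational formulation of \((\mathcal{P}_T)\), one component at a time, against carefully chosen scalar functions and to pass to the limit \(\varepsilon\to0\) in the regularized Heaviside functions \(\chi_\varepsilon^\pm\) introduced above. Throughout I fix \(i\), write \(\mathcal{V}\eqldef\mathcal{B}(\cdot,u(\cdot))\), and record from Lemma \ref{Lm4} (with \(s=2\)) that \(\nabla\mathcal{V}\in\L^\infty(\mathcal{Q}_T)\), \(\Delta\mathcal{V}\in\L^1(\mathcal{Q}_T)\) and \(\mathcal{V}_{|_{\partial\Omega}}\in\L^\infty(\Gamma_T)\); set \(Z\eqldef\norm[\L^\infty(0,T;\L^\infty(\Bd))]{\mathcal{V}}\). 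Since \(\chi_\varepsilon^\pm\) is smooth with bounded derivative, \(\chi_\varepsilon^\pm(u^i-z)\in\L^2(0,t;\H^1(\Omega))\) for any constant \(z\), so it is an admissible test function once the other components are set to zero. For every such choice the time pairing is rewritten by the chain rule for \(\frac{\dd u^i}{\dd t}\in\L^2(0,t;(\H^1(\Omega))')\), namely \(\int_0^t\langle\frac{\dd u^i}{\dd\tau},\chi_\varepsilon^\pm(u^i-z)\rangle\dd\tau=\int_\Omega(u^i-z)^\pm_\varepsilon(t)\dd x-\int_\Omega(u^i_0-z)^\pm_\varepsilon\dd x\).

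For \emph{(i)} I would first take \(z=0\) and the test function \(\chi_\varepsilon^-(u^i)\). The diffusion term equals \(\int_{\mathcal{Q}_t}(\chi_\varepsilon^-)'(u^i)\abs{\nabla u^i}^2\ge0\) and is discarded; the drift term \(\alpha^i\int_{\mathcal{Q}_t}u^i\,\nabla\mathcal{V}\cdot\nabla(\chi_\varepsilon^-(u^i))\) tends to \(0\) by Lemma \ref{GE:Lemma1}\emph{(ii)} applied with \(h=\alpha^i\partial_j\mathcal{V}\) and \(z=0\); and the boundary term \(\int_{\Gamma_t}\sigma^i\,\chi_\varepsilon^-(u^i)\,\dd\mu\dd\tau\) converges, by dominated convergence (the trace of \(u^i\) lies in \(\L^2(\Gamma_t)\) and, by (A--2) together with \(\mathcal{V}_{|_{\partial\Omega}}\in\L^\infty\), \(\sigma^i(u^i)\in\L^1(\Gamma_t)\)), to \(\int_{\Gamma_t}\sigma^i\chi^-(u^i)\le0\) by \eqref{eq:15_2}. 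Passing to the limit yields \eqref{eq:GE4_2}. For \eqref{eq:GE4_1} I take \(z=k^i\) and the test function \(\chi_\varepsilon^+(u^i-k^i)\); the diffusion term again has the favourable sign and is dropped. For the drift term I split \(u^i=(u^i-k^i)+k^i\): the contribution of \((u^i-k^i)\) vanishes in the limit by Lemma \ref{GE:Lemma1}\emph{(ii)} (with \(h=\alpha^i\partial_j\mathcal{V}\), \(z=k^i\)), while the contribution of \(k^i\), namely \(\alpha^i k^i\int_{\mathcal{Q}_t}\nabla\mathcal{V}\cdot\nabla(\chi_\varepsilon^+(u^i-k^i))\), is integrated by parts in space (Green's formula, valid since \(\mathcal{V}\in\W^{2,1}\cap\W^{1,\infty}\)) and, once moved to the right-hand side, produces in the limit the terms \(-\alpha^i k^i\int_{\Gamma_t}\chi^+(u^i-k^i)\nabla\mathcal{V}\cdot\nu\,\dd\mu\dd\tau+\alpha^i k^i\int_{\mathcal{Q}_t}\chi^+(u^i-k^i)\Delta\mathcal{V}\,\dd x\dd\tau\) of \eqref{eq:GE4_1}. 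Finally the boundary term converges to \(\int_{\Gamma_t}\sigma^i\chi^+(u^i-k^i)\le\Lambda_T\,\mu(\Bd)\,t\) by \eqref{eq:15_1}; collecting these limits gives \eqref{eq:GE4_1}.

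For \emph{(ii)}, with \(u^i\ge0\), I test with \(w^i=u^i\). The time pairing produces \(\tfrac12\norm[\L^2(\Omega)]{u^i(t)}^2-\tfrac12\norm[\L^2(\Omega)]{u^i_0}^2\), the diffusion term produces \(\norm[\L^2(\mathcal{Q}_t)]{\nabla u^i}^2\), and the drift term is kept as \(\alpha^i\int_{\mathcal{Q}_t}u^i\nabla\mathcal{V}\cdot\nabla u^i\) and moved to the right-hand side. It remains to estimate the boundary term \(\int_{\Gamma_t}\sigma^i(u^i)\,u^i\,\dd\mu\dd\tau\). I split the integration according to the pointwise value of the trace: where \(0\le u^i\le k^i\) the argument \((\tau,x,u^i,\mathcal{V})\) lies in \(A_i(T,Z)\), whence \(\sigma^i(u^i)\le\sup_{A_i(T,Z)}\abs{\sigma^i}\); where \(u^i>k^i\) one has \(\chi^+(u^i-k^i)=1\) and \eqref{eq:15_1} gives \(\sigma^i(u^i)\le\Lambda_T\). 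Since \(u^i\ge0\) this yields \(\sigma^i(u^i)u^i\le(\Lambda_T+\sup_{A_i(T,Z)}\abs{\sigma^i})u^i\) pointwise, and integrating over \(\Gamma_t\) produces exactly \eqref{eq:GE4_3}.

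The routine but delicate points, on which I would spend most care, are the two integration-by-parts identities and the limit passages. In time, the chain-rule formula for the duality pairing must be justified for \(u^i\in\L^2(0,t;\H^1)\) with \(\frac{\dd u^i}{\dd t}\in\L^2(0,t;(\H^1)')\) and a smooth, Lipschitz nonlinearity (for \emph{(ii)} this is the standard identity \(\langle\frac{\dd u^i}{\dd t},u^i\rangle=\tfrac12\frac{\dd}{\dd t}\norm[\L^2(\Omega)]{u^i}^2\)); in space, Green's formula is applied to the \(\W^{2,1}\cap\W^{1,\infty}\) potential \(\mathcal{V}\) against \(\H^1\cap\L^\infty\) test functions. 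The genuine obstacle, however, is the boundary term: one must ensure that \(\sigma^i\) evaluated at the (merely \(\L^2\)) trace of \(u^i\) is integrable on \(\Gamma_t\) before invoking dominated convergence, and then use the bounded non-dissipativity \eqref{eq:15_1}--\eqref{eq:15_2} in the correct one-sided form. This is precisely where assumptions (A--2) (to give meaning to, and \(\L^1\) control of, \(\sigma^i(u^i)\)) and (A--3) (to sign the limit) are both needed, while --- as announced --- the Lipschitz constants \(K_M\) never enter the final bounds.
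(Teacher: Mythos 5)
Your proposal is correct and follows essentially the same route as the paper: test with $\chi_\varepsilon^{\pm}(u^i-z)$, drop the signed diffusion term, split the drift as $(u^i-z)+z$ and integrate the constant part by parts via Green's formula, control the boundary term through (A--3), and pass to the limit $\varepsilon\to 0$; part \emph{(ii)} is handled identically. The only points where the paper does a bit more explicit work are the chain rule in time (justified there by extending $u^i$, mollifying in time, and passing to the limit in \eqref{eq:GE6}) and the boundary term, which the paper bounds uniformly in $\varepsilon$ via the identity $\chi^{\pm 2}\chi_\varepsilon^{\pm}=\chi_\varepsilon^{\pm}$ rather than by your dominated-convergence passage --- both of which you correctly flag and which are equivalent in substance.
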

\begin{proof}
We prove \eqref{eq:GE4_2} and \eqref{eq:GE4_1} at the same time. 
In the sequel, \((\chi_{\varepsilon}^{\pm},(\cdot)_{\varepsilon}^{\pm}, \chi^{\pm},
(\cdot)^{\pm}, z^i)\) denotes either 
\((\chi_{\varepsilon}^+,(\cdot)_{\varepsilon}^{+},\chi^+,(\cdot)^+,
k^i)\) or 
\((\chi_{\varepsilon}^-, (\cdot)_{\varepsilon}^{-}, \chi^-,(\cdot)^-, 0\)). 
Since 
\(\chi_{\varepsilon}^{\pm}(u^i-z^i)\in\L^2(0,T;\H^1(\Omega))\),
 \((\mathcal{P}_{T})\) provides for any \(t\in[0,T]\) 
\begin{equation}
 \label{eq:GE5}
\begin{aligned}
&\int_0^t\Bigl{\langle}\frac{\dd u^i}{\dd t}
(\tau),\chi_{\varepsilon}^{\pm}(u^i-z^i)(\tau)\Bigr{\rangle}\dd\tau\\
&=-\int_{{\mathcal{Q}}_t} 
(\nabla(u^i-z^i)\cdot
\nabla(\chi_{\varepsilon}^{\pm}(u^i-z^i))
+
\alpha_i u^i\nabla\mathcal{V}\cdot
\nabla (\chi_{\varepsilon}^{\pm}(u^i-z^i)))
\dd x\dd\tau\\&
+\int_{\Gamma_t}\sigma^i(\tau,x,u^i(\tau,x),
\mathcal{V}(\tau,x))\chi_{\varepsilon}^{\pm}(u^i(\tau,x)-z^i)\dd\mu\dd\tau.
\end{aligned}
\end{equation}
We estimate the various terms 
appearing in the equality \eqref{eq:GE5}. 
Note first that 
\begin{equation}
\label{1756}
-\int_{{\mathcal{Q}}_t} \nabla(u^i-z^i)\cdot
\nabla(\chi_{\varepsilon}^{\pm}(u^i-z^i))
\dd x\dd\tau \leq 0,
\end{equation}
due to \((\chi_{\varepsilon}^{\pm})'\geq 0\). Next,
since \(u^i\in\L^{\infty}(0,T;\L^2(\Omega))\cap
\L^2(0,T;\H^{1}(\Omega))\), we have
\begin{equation}
\label{1757}
\begin{aligned}
&-
\int_{{\mathcal{Q}}_t} u^i\nabla\mathcal{V}\cdot
\nabla (\chi_{\varepsilon}^{\pm}(u^i-z^i))\dd x\dd\tau 
=
-
\int_{{\mathcal{Q}}_t} (u^i-z^i)\nabla\mathcal{V}\cdot
\nabla (\chi_{\varepsilon}^{\pm}(u^i-z^i))
\dd x\dd\tau\\
&+z^i
\int_{{\mathcal{Q}}_t} \Delta\mathcal{V}
\chi_{\varepsilon}^{\pm}(u^i-z^i)
\dd x\dd\tau
-
z^i
\int_{\Gamma_t}\chi_{\varepsilon}^{\pm}(u^i-z^i)
\nabla\mathcal{V}\cdot
\nu
\dd \mu\dd\tau\\
&\underset{\varepsilon\rightarrow 0}{\longrightarrow} 
z^i
\int_{{\mathcal{Q}}_t} \Delta\mathcal{V}
\chi^{\pm}(u^i-z^i)
\dd x\dd\tau
-
z^i
\int_{\Gamma_t}\chi^{\pm}(u^i-z^i)
\nabla\mathcal{V}\cdot
\nu
\dd \mu\dd\tau,
\end{aligned}
\end{equation}
due to Lemma \ref{GE:Lemma1}, \eqref{eq:GE2_3}
and \eqref{eq:GE2_5}, Lemma 
\ref{Lm4} and a trace lemma. For the boundary term, using 
Lemma \ref{GE:Lemma1}, 
\eqref{eq:GE2_4} and assumption (A--3), we get 
\begin{equation}
\label{1758}
 \begin{aligned}
  &\int_{\Gamma_t}\sigma(\tau,x,u^i(\tau,x),
\mathcal{V}(\tau,x))\chi_{\varepsilon}^{\pm}(u^i(\tau,x)-z^i)
\dd\mu\dd\tau\\
&=
\int_{\Gamma_t}\sigma(\tau,x,u^i(\tau,x),
\mathcal{V}(\tau,x))\chi^{\pm}(u^i(\tau,x)-z^i)
(\chi^{\pm}
\chi_{\varepsilon}^{\pm})(u^i(\tau,x)-z^i)\dd\mu\dd\tau\\
&\leq 
\int_{\Gamma_t}A_T\dd\mu\dd\tau
= \mu(\Bd)tA_T,
 \end{aligned}
 \end{equation}
with 
\begin{equation}
\label{bardeleben}
A_T\eqldef 0 \quad\textrm{for}\quad z^i=0
\quad\textrm{and}\quad A_T\eqldef\Lambda_T
\quad\textrm{for}\quad z^i=k^i.
\end{equation} 
Last, we observe that
for any \(w\in\C^{\infty}([0,T];\H^1(\Omega))\), we have
\begin{equation}
\label{eq:GE6}
\begin{aligned}
\int_0^t\Bigl{\langle}\frac{\dd (w-z^i)}{\dd t}(\tau),\chi_{\varepsilon}^{\pm}(w(\tau)-z^i)
\Bigl{\rangle}\dd\tau
&=
\int_{{\mathcal{Q}}_t} \frac{\dd}{\dd t}(w-z^i)_{\varepsilon}^{\pm}\dd x\dd\tau
\\&=
\int_{\Omega}\bigl((w(t)-z^i)_{\varepsilon}^{\pm}-
(w(0,\cdot)-z^i)_{\varepsilon}^{\pm}
\bigr)\dd x.
\end{aligned}
\end{equation}
Let \(E(u^i)\in \L^2(\mathbb{R}; \H^1(\Omega))
\cap \H^1(\mathbb{R}; \H^1(\Omega)')\)
be an extension of \(u^i\). Denote by
\(\theta \in \mathscr{D}(\mathbb{R})\)
a probability density, and for any 
\(\eta >0, t\in \mathbb{R}\), write 
\(\theta_{\eta}(t) \eqldef \eta^{-1}\theta(\eta^{-1}t)\).
Set in equality \eqref{eq:GE6} 
\(w\eqldef(E(u^i))\star \theta_{\eta}\),
where \(\star \) denotes the convolution with respect to the time
variable.
Letting \(\eta\) tends to \(0\), we see that
\eqref{eq:GE6} holds true with \(w =u^i\).
Furthermore, Lemma \ref{GE:Lemma1} implies that
\begin{equation}
\label{eq:GE7}
\int_0^t\Bigl{\langle}
\frac{\dd (u^i-z^i)}{\dd t}(\tau),\chi_{\varepsilon}^{\pm}(u^i(\tau)-z^i)\Bigl{\rangle}\dd\tau
\underset{\varepsilon\rightarrow 0}{\longrightarrow} 
\int_{\Omega}
((u^i(t)-z^i)^{\pm}-(u_0^i-z^i)^{\pm})\dd x,
\end{equation}
for all  \(i\in\{1,\ldots,n\}\). 
It follows from \eqref{eq:GE5}--\eqref{1758}, and \eqref{eq:GE7} that
\begin{equation}
\label{eq:GE16}
\begin{aligned}
&\int_{\Omega}
(u^i-z^i)^{\pm}\dd x 
-\int_{\Omega}
(u^i_0-z^i)^{\pm} \dd x
\leq 
z^i\alpha_i\int_{{\mathcal{Q}}_T} 
\Delta\mathcal{V}\chi^{\pm}(u^i-z^i)\dd x\dd \tau\\
&-
z^i\alpha_i\int_{{\mathcal{Q}}_T} 
\chi^{\pm}(u^i-z^i)\nabla \mathcal{V}\cdot \nu\dd x\dd\tau
+\mu(\Bd)tA_T.
 \end{aligned}
 \end{equation}
Now, \eqref{eq:GE4_2} and  \eqref{eq:GE4_1}
follow from 
\eqref{bardeleben} and 
\eqref{eq:GE16}.
 Finally, \eqref{eq:GE4_3}
 follows from \((\mathcal{P}_T)\)
 with \(w = (0,\ldots, 0, 
 u^i,0,\ldots, 0)\)
 and the estimate
 \begin{equation*}
\int_{\Gamma_t}
\sigma^i(\tau, x, u^i(\tau, x),
\mathcal{V}(\tau, x))u^i\dd\mu \dd\tau
\leq 
(\Lambda_T + \sup_{A_i(T, 
\norm[\L^{\infty}(0, T, 
\L^{\infty}(\Bd))]{\mathcal{V}})}
\vert \sigma^i \vert )
\int_{\Gamma_t} u^i \dd\mu \dd\tau
 \end{equation*}
 is a consequence of condition (A--3),
 \(u^i\geq 0\) and the definition of 
 \(A_i(T, \mathcal{V})\).
\end{proof}
Let \(t\in [0, T]\). 
We now prove our main \(\L^2\) estimates,
 which hold in the functional spaces
 \(\E_{t}\eqldef \L^2(0,t;\bH^1(\Omega))
\cap\L^{\infty}([0,t];\bL^2(\Omega))\)
For any 
\(v\eqldef(v_1,\ldots, v_n)\in \E_{t}\), set
\begin{equation}
\label{Norme}
\norm[\E_{t}]{v}^2{\eqldef}\sum_{i=1}^n\bigl(
\norm[\L^{\infty}(0,t;\L^2(\Omega))]{v^i}^2+
\norm[\L^2({{\mathcal{Q}}_t})]{\nabla v^i}^2
\bigr).
\end{equation}
Observe that until the end of
 the paper, \(\vert\cdot\vert_1\) denotes
 the \(\ell^1\) norm in \(\mathbb{R}^n\).
 
Before
proceeding, notice the following
inequalities, valid for \(x\geq 0\)
 and \(z\in\mathbb{R}\): 
  \((x-z)^+-\vert z \vert
 \leq x \leq (x-z)^+ +z\). As a consequence,
 for any \(p\in [1, \infty)\), \(z\in \mathbb{R}\),
 \(v\in \L^p(U)\) with \(v\geq 0\)
(and \(U 
\) a bounded domain)
\begin{equation}
\label{Lm44}
 \norm[\L^p(U)]{(v-z)^+}-\vert z\vert 
 \vert U \vert^{1/p}
 \leq 
  \norm[\L^p(U)]{v}
   \leq \norm[\L^p(U)]{(v-z)^+}+\vert z\vert 
 \vert U \vert^{1/p}.
\end{equation}
Together with Lemma \ref{GE:Lemma2}, this provide
 \eqref{GE:Lemma3} below. In the statement of this lemma, 
 we keep track of the dependences with
 respect to the constitutive constants appearing in conditions
 (A--1)--(A--3), since this will turn out to be useful
 in the next section. Nevertheless, 
 we drop in our writings the 
 extraneous dependences such those with respect to \(\Omega\).
 
\begin{lemma}
 \label{GE:Lemma3}
 Under the assumptions of Lemma \ref{GE:Lemma2}, and assuming
 that
 \(u_0^i\geq 0\) a.e for
 any \(i\in\{1,\ldots,n\}\), we have
\begin{subequations}
 \label{eq:GE17}
 \begin{align}
 \label{eq:GE17_1}
 &u^i(t)\geq 0\text{ for any }t\in[0,T], \text{ x a.e},\\&
\label{eq:GE17_2}
\norm[\L^1(\Omega)]{u(t)}\leq
 C_1
 \mathrm{e}^{C_2t},\\&
 \label{eq:GE17_3}
 \norm[\rm{E}_t]{u}
 \leq
 C_3
 \mathrm{e}^{C_4t}\text{ for all }t\in[0,T],
 \end{align}
\end{subequations}
with 
the notations \(C_1 \eqldef C_1(\norm[\L^1]{u_0},
\Lambda_T, k^1,\ldots, k^n, T,  
\Vert \mathcal{B} \Vert)\), 
\(C_2\eqldef C_2(
\Lambda_T, k^1,\ldots, k^n,  T,  
\Vert \mathcal{B} \Vert)\),
\(C_3 \eqldef C_3(\norm[\L^1]{u_0},
\norm[\L^2]{u_0},
\Lambda_T^{*}, k^1,\ldots, k^n,  T, 
\Vert \mathcal{B} \Vert)\),
\(C_4\eqldef C_4(\norm[\L^1]{u_0},
\Lambda_T^{*}, k^1,\ldots, k^n,  T, 
\Vert \mathcal{B} \Vert)\), \(\Lambda_T^{*}\eqldef\Lambda_T+\sum_{i=1}^n
\sup_
{A_i(T, C_{*})}\vert \sigma^i \vert\) 
and \(C_{*}\eqldef C_{*}(\norm[\L^1]{u_0},
\Lambda_T, k^1,\ldots, k^n, T, 
\Vert \mathcal{B}\Vert)\).
Moreover, the constants \(C_i\) and  \(C_{*}\)
are non-decreasing functions
of their arguments.
\end{lemma}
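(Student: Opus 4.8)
The plan is to derive the three bounds \eqref{eq:GE17_1}--\eqref{eq:GE17_3} in succession, the key being that the potential estimates of Lemma \ref{Lm4} are available for every $s\geq 1$. The positivity \eqref{eq:GE17_1} is immediate: since $u_0^i\geq 0$ forces $(u_0^i)^-=0$, the bound \eqref{eq:GE4_2} of Lemma \ref{GE:Lemma2} gives $\norm[\L^1(\Omega)]{(u^i)^-(t)}\leq 0$, hence $u^i(t)\geq 0$ a.e. The structural point that dictates the order of the two remaining steps is that the drift term in \eqref{eq:GE4_3} carries a factor $\nabla\mathcal{V}$: controlling it through Lemma \ref{Lm4} with $s=2$ would produce a coefficient proportional to $\norm[\bL^2(\Omega)]{u}$, i.e.\ a quadratic nonlinearity to which Gr\"onwall's lemma does not apply. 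I therefore close the $\L^1$ bound \eqref{eq:GE17_2} first, using the $s=1$ version of Lemma \ref{Lm4}, and only afterwards feed it into the $\L^2$ estimate.

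For \eqref{eq:GE17_2} I would start from \eqref{eq:GE4_1} and bound its right-hand side. Using $\chi^+\leq 1$, the volume term is dominated by $\abs{k^i\alpha_i}\int_0^t\norm[\W^{2,1}(\Omega)]{\mathcal{V}(\tau)}\,\dd\tau$ and the boundary term by $\abs{k^i\alpha_i}\mu(\Bd)\int_0^t\norm[\W^{1,\infty}(\Omega)]{\mathcal{V}(\tau)}\,\dd\tau$, since $\nabla\mathcal{V}\cdot\nu$ on $\Bd$ is controlled by $\norm[\W^{1,\infty}(\Omega)]{\mathcal{V}}$. Applying Lemma \ref{Lm4} with $s=1$ makes both bounds affine in $\norm[\bL^1(\Omega)]{u(\tau)}$. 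Converting $\norm[\L^1(\Omega)]{(u^i-k^i)^+}$ back into $\norm[\L^1(\Omega)]{u^i}$ via \eqref{Lm44} (with $p=1$, $z=k^i$, using $u^i\geq 0$) and summing over $i$ yields an integral inequality $\norm[\bL^1(\Omega)]{u(t)}\leq a+b\int_0^t\norm[\bL^1(\Omega)]{u(\tau)}\,\dd\tau$ whose multiplicative coefficient $b$ is independent of the size of the data. Gr\"onwall's lemma then gives \eqref{eq:GE17_2} with $C_1,C_2$ as listed, the growth rate $C_2$ not involving $\norm[\L^1]{u_0}$.

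For \eqref{eq:GE17_3}, the $\L^1$ bound just obtained together with Lemma \ref{Lm4} ($s=1$) and the embedding $\W^{1,\infty}(\Omega)\hookrightarrow\L^\infty(\Bd)$ furnish a bound $\norm[\L^\infty(0,T;\L^\infty(\Bd))]{\mathcal{V}}\leq C_{*}$, with $C_{*}$ as in the statement; this makes $\Lambda_T^{*}$ finite and turns $\sup_{\tau\leq T}\norm[\W^{1,\infty}(\Omega)]{\mathcal{V}(\tau)}$ into a known constant. Starting from \eqref{eq:GE4_3} summed over $i$, I would estimate the drift by Young's inequality, $\abs{\alpha_i}\,\abs{u^i}\,\abs{\nabla\mathcal{V}}\,\abs{\nabla u^i}\leq\tfrac14\abs{\nabla u^i}^2+\abs{\alpha_i}^2\norm[\L^\infty]{\nabla\mathcal{V}}^2\abs{u^i}^2$, absorbing the gradient part into the left and bounding the coefficient of $\abs{u^i}^2$ by a constant depending only on $C_{*}$ and $T$. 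The boundary term $\Lambda_T^{*}\int_{\Gamma_t}u^i\,\dd\mu\,\dd\tau$ is handled by $\norm[\L^1(\Bd)]{u^i}\leq\mu(\Bd)^{1/2}\norm[\L^2(\Bd)]{u^i}$ followed by the standard trace inequality $\norm[\L^2(\Bd)]{v}^2\leq\delta\norm[\L^2(\Omega)]{\nabla v}^2+C_\delta\norm[\L^2(\Omega)]{v}^2$, with $\delta$ small enough that the total absorbed gradient stays below half of $\sum_i\norm[\L^2(\mathcal{Q}_t)]{\nabla u^i}^2$. What remains is $\tfrac12\sum_i\norm[\L^2(\Omega)]{u^i(t)}^2\leq\tfrac12\norm[\bL^2(\Omega)]{u_0}^2+\int_0^t\bigl(c_1\sum_i\norm[\L^2(\Omega)]{u^i(\tau)}^2+c_2\bigr)\,\dd\tau$, to which Gr\"onwall applies; this bounds the $\L^\infty(0,t;\bL^2)$ part, and reinserting it controls the gradient part, giving \eqref{eq:GE17_3} in the $\E_t$ norm \eqref{Norme} with $C_4$ independent of $\norm[\L^2]{u_0}$. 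Monotonicity of the constants is clear from the construction. The single genuine obstacle is precisely this decoupling: one must close \eqref{eq:GE17_2} before \eqref{eq:GE17_3} and invoke the $s=1$ case of Lemma \ref{Lm4}, so that the drift coefficient in \eqref{eq:GE4_3} is an a priori known function of $t$ rather than an unknown proportional to $\norm[\bL^2(\Omega)]{u}$.
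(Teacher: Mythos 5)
Your proposal is correct and follows essentially the same route as the paper: positivity from \eqref{eq:GE4_2}, the $\L^1$ bound closed first via the $s=1$ case of Lemma \ref{Lm4} applied to the right-hand side of \eqref{eq:GE4_1}, the conversion through \eqref{Lm44} and Gr\"onwall, and then the $\L^2$ estimate using the resulting uniform bound on $\mathcal{V}$, Young's inequality on the drift, and the trace inequality \eqref{Norbert} with a small parameter to absorb the gradient. The only cosmetic difference is that you control the boundary term $\nabla\mathcal{V}\cdot\nu$ directly by $\norm[\W^{1,\infty}(\Omega)]{\mathcal{V}}$ where the paper invokes a trace bound on $\norm[\bfL^1(\Bd)]{\nabla\mathcal{V}}$; both are valid.
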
 
\begin{proof}
The Lemma \ref{GE:Lemma2} and assumption \(u_0^i\geq 0\)
a.e imply that \eqref{eq:GE17_1} holds true. 

We now prove inequality \eqref{eq:GE17_2}. 
Before proceeding, remark that 
\eqref{eq:Loc7}
and trace lemmas entail that, for any \(t\in [0, T)\), we have 
\begin{equation}
\label{potential}
\begin{aligned}
\norm[\W^{2, 1}(\Omega)
\cap \W^{1, \infty}(\Omega)]{\mathcal{V}(t)}
+
\norm[\bfL^1(\Bd)]{\nabla\mathcal{V}(t)}
\leq 
C(T, \Vert\mathcal{B}\Vert, \Omega)\big(
\norm[\mathbf{L}^1(\Omega)]{u(t)}+1\big)   
\end{aligned}
\end{equation}
We derive from
inequalities \eqref{eq:GE4_1} and 
\eqref{potential}  that, for any
\(t\in[0, T)\), \(i\in\{1,\ldots,n\}\)
\begin{equation}
\label{2018}
\begin{aligned}
&\norm[\L^1(\Omega)]{(u^i-k^i)^+(t)}
\leq
\Vert (u^i_0-k^i)^+\Vert_{\L^1(\Omega)}
\\&+
\vert
k^i\alpha_i
\vert
C(T, \Vert \mathcal{B}\Vert , \Omega)
(\norm[\L^{1}(0, t, \mathbf{L}^1(\Omega))]{u}
+t)+ \mu(\Bd)\Lambda_Tt.
\end{aligned}
\end{equation}
Taking the sum over
\(i\in\{1,\ldots, n\}\)
and using 
\eqref{Lm44}, we get
\begin{equation}
\label{2028}
\begin{aligned}
 &\norm[\mathbf{L}^1(\Omega)]{u(t)}
 \leq \norm[\mathbf{L}^1(\Omega)]{u_0}
 +\vert k \vert_{1}\vert \alpha \vert_{1}
 C(T, \Vert\mathcal{B}\Vert, \Omega)
(\norm[\L^{1}(0, t, \mathbf{L}^1(\Omega))]{u}
+t)\\&+ n\mu(\Bd)\Lambda_Tt
+
2n\vert
\Omega\vert\vert k \vert_{1}.
\end{aligned}
\end{equation}
Appealing to Gr\"onwall lemma, we obtain 
\eqref{eq:GE17_2}.

We finally
prove inequality \eqref{eq:GE17_3}.
Inequality \eqref{potential} together
with inequality 
\eqref{eq:GE17_2} and a trace lemma, give
\begin{equation}
\label{infini}
\norm[\L^{\infty}(0, T;\W^{1, \infty}(
\Omega))]{\mathcal{V}}+
\norm[\L^{\infty}(\Gamma_T)]{\mathcal{V}}
\leq 
C_{*}
\end{equation}
with \(C_{*}\eqldef C_{*}(\norm[\L^1(\Omega)]{u_0},
\Lambda_T, k^1,\ldots, k^n, T, 
\Vert\mathcal{B}\Vert, \Omega)\).
Recall that for any \(w\in\H^1(\Omega)\), we have
\begin{equation}
\label{Norbert}
\norm[\L^2(\Bd)]{w}
\leq C 
\norm[\H^{3/2}(\Omega)]{w}
\leq
\epsilon \norm[\H^1(\Omega)]{w}+
C_{\epsilon}
\norm[\L^2(\Omega)]{w}
\end{equation}
for any \(\epsilon >0\), so that, integrating
\eqref{Norbert} and using Young inequality, we get
\begin{equation}
\label{naze}
\norm[\L^1(0, t, \L^2(\Bd))]{u^i}
\leq 
t
+ C_{\eta}\norm[\L^2({{\mathcal{Q}}_t} )]{u^i}^2
+\eta
\norm[\L^2(0, t, \bfL^2(\Omega))]{\nabla u^i}^2
\end{equation}
for any \(\eta > 0\). Hence, identity 
\eqref{naze}, \eqref{infini}, \eqref{eq:GE4_3} and Young inequalities
imply that
\begin{equation}
\label{gr}
\begin{aligned}
&\frac12 \norm[\L^2(\Omega)]{u^i(t)}^2
+\norm[\L^2(\mathcal{Q}_t)]
{\nabla u^i}^2\\&
\leq
\frac12 \norm[\L^2(\Omega)]{u^i_0}^2+
C_{*}\abs{\alpha^i}(
C_{\eta}\norm[\L^2({{\mathcal{Q}}_t})]{u^i}^2
+\eta
\norm[\L^2(0, t, \bfL^2(\Omega))]{\nabla u^i}^2
)\\&+(\Lambda_T + \sup_{A_i(T, C_{*})}
\vert \sigma^i \vert )
(t
+ C_{\eta}\norm[\L^2({{\mathcal{Q}}_t})]{u^i}^2
+\eta
\norm[\L^2(0, t, \bfL^2(\Omega))]{\nabla u^i}^2).
\end{aligned}
\end{equation}
Now, inequality \eqref{eq:GE17_3}
follows by choosing \(\eta > 0\)
small enough in \eqref{gr} and Gr\"onwall lemma. 
\end{proof}

\begin{corollary}
\label{GE:Cor1}
Let \(T>0\) be fixed. 
Assume that \(\emph{(A--1)--(A--3)}\) hold true. Assume that \(u_0\in\mathbf{L}^2(\Omega)\)
and \(u^i\geq 0\) for any \(i\in \{1, \ldots, n\}\).
Then, 
the problem
\((\mathcal{P}_{T})\)
admits exactly one solution.
Moreover, \(u(t)\geq 0\)
for any \(t\in [0, T]\).
\end {corollary}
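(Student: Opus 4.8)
The plan is to promote the local well-posedness of Theorem~\ref{Cor6} to global well-posedness on $[0,T]$ by a continuation argument, with the a priori bounds of Lemma~\ref{GE:Lemma3} serving to preclude blow-up, while positivity is read off directly from \eqref{eq:GE17_1}.

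I would first set
\[
T^{*}\eqldef\sup\{t_0\in(0,T]:\ (\mathcal{P}_{t_0})\text{ has a solution}\},
\]
which is positive by Theorem~\ref{Cor6}. Local uniqueness from that same theorem, propagated by a standard open--closed connectedness argument, shows that any two solutions agree wherever both are defined; restricting and gluing then produces a single solution $u$ on $[0,T^{*})$, unique there, and satisfying $u^i\geq 0$ by \eqref{eq:GE17_1}.

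The core step is to show $T^{*}=T$. Arguing by contradiction, suppose $T^{*}<T$. For every $t_0<T^{*}$ the restriction of $u$ solves $(\mathcal{P}_{t_0})$, so Lemma~\ref{GE:Lemma3} applies; crucially, the constants $C_3,C_4$ there depend only on the data and on $T$, not on $t_0$. Hence \eqref{eq:GE17_3} gives
\[
\sup_{t\in[0,T^{*})}\norm[\bfL^2(\Omega)]{u(t)}\leq\norm[\E_{T^{*}}]{u}\leq C_3\,\mathrm{e}^{C_4 T}\eqldef M,
\]
with $M$ independent of $T^{*}$. Since $u\in\C^0([0,t_0];\bfL^2(\Omega))$ for each $t_0<T^{*}$ with this uniform bound, $u$ extends continuously to $T^{*}$ with $\norm[\bfL^2(\Omega)]{u(T^{*})}\leq M$. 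Re-applying Theorem~\ref{Cor6} with initial datum $u(T^{*})$ (its proof is insensitive to the choice of initial time under the $t$-uniform assumption (A--1)), and invoking that the guaranteed existence time is a nondecreasing function of the $\bfL^2$ norm of the datum alone, I obtain a step $\delta>0$ depending only on $M$ and the fixed constants on $[0,T]$ such that a solution exists on $[T^{*},T^{*}+\delta]$. Concatenating it with $u$ across $T^{*}$ --- continuity in $\bfL^2$ and the membership $\tfrac{\mathrm{d}u}{\mathrm{d}t}\in\L^2(0,\cdot\,;(\bH^1(\Omega))')$ survive the matching of traces at $T^{*}$, and the variational identity splits additively over $[0,T^{*}]$ and $[T^{*},T^{*}+\delta]$ --- yields a solution on an interval strictly containing $[0,T^{*})$, contradicting maximality. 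Thus $T^{*}=T$; one further application of the same continuation step at $T$ furnishes the solution on the closed interval $[0,T]$, on which $u(t)\geq 0$ by \eqref{eq:GE17_1}.

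The main obstacle is the uniformity underlying this last step: the increment $\delta$ must be bounded below independently of where one restarts, which is exactly guaranteed by the two uniform features of the machinery --- Theorem~\ref{Cor6} ties the existence time to $\norm[\bfL^2(\Omega)]{u_0}$ only, and the constants of Lemma~\ref{GE:Lemma3} are independent of the existence interval. With these in hand the remaining verifications (the connectedness argument and the concatenation) are routine.
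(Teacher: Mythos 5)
Your proposal is correct and follows essentially the same strategy as the paper: the paper's own (two-line) proof likewise combines the fact from Theorem~\ref{Cor6} that the local existence time is a nondecreasing function of \(\norm[\bfL^2(\Omega)]{u_0}\) alone with the uniform bound \eqref{eq:GE17_3} of Lemma~\ref{GE:Lemma3} to conclude by continuation, and positivity from \eqref{eq:GE17_1}. You have simply written out the standard bootstrap/concatenation details that the paper leaves implicit.
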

\begin{proof}
As quoted in Theorem
\ref{Cor6}, the time existence
\(t_0\) is a function of \(\norm[\bfL^2(\Omega)]{u_0}\)
only. Due to Lemma
\ref{GE:Lemma3}, inequality 
\eqref{eq:GE17_3}, global  well-posedness follows.
\end{proof}


\section{Trace integrals inequalities.}
\label{Estimates_trace}

Our goal is to prove that Corollary 
\ref{GE:Cor1} holds true under a relaxed
assumption (A--2). This shall be done in Section
\ref{lasection} below. Since we argue 
by density, we first
have to determined the relevant estimates 
for the trace terms.
Our trace integral estimates 
(cf.~Lemma \ref{Lm7}) are 
consequences of a simple continuity 
lemma (see Lemma \ref{Lm1007} below).
Since in the sequel we loose an arbitrary 
small order 
of derivation by the use of the Aubin-Lions
lemma, we introduce a somewhat larger space than
\(\E_T\). 
For \(t\in(0, T)\) and \(\alpha \geq 0\), define
\begin{equation}
\label{spacal}
\E_t^{\alpha}
\eqldef \L^{\infty}(0, t, \mathbf{H}^{-\alpha}(\Omega))
\cap \L^2(0, t, \mathbf{H}^{1-\alpha}
(\Omega)) \quad\text{and}\quad
{\dot{\E}}_t\eqldef \bigcap_{0<\alpha \leq 1}\E_t^{\alpha}. 
\end{equation}
The space \({\dot{\E}}_t\) is endowed
with its natural Fr\'echet structure.
In particular, \(f_n\underset{n\rightarrow \infty}{\longrightarrow}  f\)
in \({\dot{\E}}_t\) iff 
\(f_n\underset{n\rightarrow \infty}{\longrightarrow} f\)
in all the \(\E_t^{\alpha}\), \(\alpha\in (0, 1)\).
By interpolation, for any
\(s\in[2, \infty)\) and 
\(r\in(0, 1)\), we have 
\begin{equation}
\label{spacil}
\L^{\infty}(0,T;\L^2(\Omega))\cap
\L^2(0,T;\H^{1}(\Omega))= \E_T^0
\inj 
{\dot{\E}}_T\inj \L^s(0,T;\L^2(\Omega))\cap
\L^2(0,T;\H^{r}(\Omega)).
\end{equation}
Until this end of the paper, we always 
abridge the notation \(\E_T^0\)
in \(\E_T\) (see \eqref{Norme}) 
and still denote by \(\mathcal{C}_{T, R}\) the closed ball
with radius \(R>0\) in \(E_T\).
\begin{lemma}
\label{Lm1007}
Let \(T>0\). We assume that
$ 1\leq p<2+\frac2d$ and $m\in[1,\infty[$.
Then, there exists \(\alpha = \alpha_{p, m}\in (0, 1)\) and \(C = C_{p, m}>0\) 
such that for any \((v, \bar{v})\in {\dot{\E}}_T\times 
\dot{\E}_T\), we have
\begin{subequations}
\begin{align}
\label{eq:Loc15}
&\norm[\L^p(\Gamma_T)]{v}\leq
C\norm[\E_T^{\alpha}]{v},\\&
\label{eq:Loc15_10}
\norm[\L^m(\Gamma_T)]{\mathcal{V}-\bar{\mathcal{V}}}
\leq
C\norm[\E_T^{\alpha}]{v-\bar{v}},
\end{align}
\end{subequations}
with \(\mathcal{V}(t)\eqldef\mathcal{B}(t,v(t))\) and 
\(\bar{\mathcal{V}}(t)\eqldef\mathcal{B}(t,\bar{v}(t))\) 
for almost every \(t\in[0,T]\). If moreover, we assume that
\(v\in\mathcal{C}_{T,R}\) (\(R>0\)) then
\begin{equation}
\label{eq:Loc15_11}
\norm[\L^m(\Gamma_T)]{\mathcal{V}}
\leq C_{R,T}.
\end{equation}
\end{lemma}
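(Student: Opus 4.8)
All three bounds rest on a fractional trace theorem combined with interpolation in space and Hölder's inequality in time, the latter two also using the smoothing of \(\mathcal{B}\) from Lemma \ref{Lm4}. For \eqref{eq:Loc15} the plan is to estimate the spatial trace at fixed time and then integrate. I may assume \(p\geq 2\), the range \(p<2\) following from \(\L^2(\Gamma_T)\inj\L^p(\Gamma_T)\) since \(\mu(\Bd)\) and \(T\) are finite. Let \(\gamma\eqldef\frac12+(d-1)(\frac12-\frac1p)\) be the index for which the trace followed by the boundary Sobolev embedding gives \(\H^{\gamma}(\Omega)\inj\L^p(\Bd)\). Complex interpolation of the Sobolev scale then yields, for a.e.\ \(t\),
\[ \norm[\L^p(\Bd)]{v(t)}\leq C\norm[\H^{\gamma}(\Omega)]{v(t)}\leq C\norm[\H^{1-\alpha}(\Omega)]{v(t)}^{\theta}\,\norm[\H^{-\alpha}(\Omega)]{v(t)}^{1-\theta}, \]
with \(\gamma=\theta-\alpha\). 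Raising to the power \(p\), bounding the \(\H^{-\alpha}\) factor by \(\norm[\L^{\infty}(0,T;\H^{-\alpha}(\Omega))]{v}\) and integrating in \(t\), I would choose \(\theta=2/p\) so that the surviving \(\H^{1-\alpha}\) factor carries exponent \(p\theta=2\) and is controlled by \(\norm[\L^2(0,T;\H^{1-\alpha}(\Omega))]{v}^2\). The matching constraint \(\gamma=2/p-\alpha\) together with the definition of \(\gamma\) forces \(p=\frac{2d+2}{d+2\alpha}\); hence for every \(p<2+\frac2d=\frac{2d+2}{d}\) the threshold \(\alpha_{\mathrm{crit}}(p)\eqldef\frac12(\frac{2d+2}{p}-d)\) is positive, and any \(\alpha\in(0,\alpha_{\mathrm{crit}}(p))\) small enough that \(\gamma>\frac12\) (possible since \(p<4\)) yields \eqref{eq:Loc15}. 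For \(d=1\) the trace is evaluation at the endpoints and \(\H^{\gamma}(\Omega)\inj\C^0(\overline{\Omega})\) for \(\gamma>\frac12\) plays the role of the boundary embedding, with the same threshold.

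For \eqref{eq:Loc15_10} I would exploit that \(\mathcal{B}\) is \(\W^{1,\infty}(\Omega)\)-valued, which makes the trace harmless: by \eqref{eq:Loc7_2} with \(s=2\) and \(\W^{1,\infty}(\Omega)\inj\L^{\infty}(\Bd)\),
\[ \norm[\L^m(\Bd)]{(\mathcal{V}-\bar{\mathcal{V}})(t)}\leq\mu(\Bd)^{1/m}\norm[\L^{\infty}(\Bd)]{(\mathcal{V}-\bar{\mathcal{V}})(t)}\leq C\norm[\bfL^2(\Omega)]{(v-\bar v)(t)} \]
for a.e.\ \(t\). Integrating the \(m\)-th power in time gives \(\norm[\L^m(\Gamma_T)]{\mathcal{V}-\bar{\mathcal{V}}}\leq C\norm[\L^m(0,T;\bfL^2(\Omega))]{v-\bar v}\), so it remains to embed \(\E_T^{\alpha}\inj\L^m(0,T;\bfL^2(\Omega))\). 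Interpolating \(\bfL^2=\mathbf{H}^0\) between \(\H^{1-\alpha}\) and \(\H^{-\alpha}\) (with weight \(\alpha\)) and applying Hölder in time exactly as above shows this embedding holds whenever \(m\alpha\leq 2\); thus any \(\alpha\leq 2/m\) is admissible and \eqref{eq:Loc15_10} follows.

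Estimate \eqref{eq:Loc15_11} is the same argument with \(\bar v=0\): \eqref{eq:Loc7_1} and the \(\L^{\infty}(\Bd)\) trace give \(\norm[\L^{\infty}(\Bd)]{\mathcal{V}(t)}\leq C\norm[\bfL^2(\Omega)]{v(t)}+C\), whence \(\norm[\L^m(\Gamma_T)]{\mathcal{V}}\leq C\norm[\L^m(0,T;\bfL^2(\Omega))]{v}+C_T\); for \(v\in\mathcal{C}_{T,R}\) the embedding \(\E_T=\E_T^0\inj\L^m(0,T;\bfL^2(\Omega))\) contained in \eqref{spacil} bounds the right-hand side by \(C_{R,T}\). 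Finally, the single exponent \(\alpha=\alpha_{p,m}\) of the statement is the minimum of the thresholds found for \eqref{eq:Loc15} and \eqref{eq:Loc15_10}; it still lies in \((0,1)\), and since \(\E_T^{\alpha}\inj\E_T^{\alpha'}\) for \(\alpha\leq\alpha'\), all three inequalities remain valid for this common \(\alpha\).

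The main obstacle is the sharp trace bound \eqref{eq:Loc15}: one must correctly locate the critical exponent \(2+\frac2d\), track how it degrades to \(\frac{2d+2}{d+2\alpha}\) under the loss of \(\alpha\) derivatives, and verify the admissibility constraints \(\theta\in[0,1]\) and \(\gamma>\frac12\) so that both the interpolation inequality and the fractional trace theorem genuinely apply. By contrast the estimates on \(\mathcal{V}\) are comparatively soft, the \(\W^{1,\infty}\)-regularity of \(\mathcal{B}\) reducing the boundary trace to a supremum bound.
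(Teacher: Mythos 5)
Your proof is correct, and for the central estimate \eqref{eq:Loc15} it reaches the same critical exponent \(2+\frac2d\) and the same threshold \(\alpha_{\mathrm{crit}}(p)=\frac{d+1}{p}-\frac{d}{2}\) as the paper, but by a noticeably different implementation of the interpolation step. The paper first embeds \(\dot{\E}_T\) into \(\L^s(0,T;\L^2(\Omega))\cap\L^2(0,T;\H^r(\Omega))\) and then invokes the vector-valued identity \([\L^s(0,T;\L^2(\Omega)),\L^2(0,T;\H^{r}(\Omega))]_{\theta}=\L^{p}(0,T;\H^{q}(\Omega))\) for the holomorphic functor (citing Bergh--L\"ofstr\"om), landing in \(\L^p(0,T;\H^{2/p-\eta}(\Omega))\) and concluding via \(\H^{2/p-\eta}(\Omega)\inj\W^{1/p+\eta,p}(\Omega)\to\L^p(\Bd)\). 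You instead interpolate purely in space at each fixed time, \(\norm[\H^{\theta-\alpha}]{v(t)}\leq C\norm[\H^{1-\alpha}]{v(t)}^{\theta}\norm[\H^{-\alpha}]{v(t)}^{1-\theta}\), pull the \(\H^{-\alpha}\) factor out in \(\L^{\infty}_t\), and integrate with \(p\theta=2\); this avoids the Bochner-space interpolation identity altogether and works directly with the two components defining \(\E_T^{\alpha}\), at the price of having to track the trace exponent \(\gamma=\frac12+(d-1)(\frac12-\frac1p)\) and the admissibility constraints \(\theta\le1\), \(\theta-\alpha>\frac12\) by hand (which you do, including the \(d=1\) endpoint case). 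Your treatment of \eqref{eq:Loc15_10} coincides with the paper's (Lipschitz continuity of \(\mathcal{B}\) into \(\W^{1,\infty}(\Omega)\) plus the embedding \(\E_T^{\alpha}\inj\L^m(0,T;\L^2(\Omega))\)), and you supply the short argument for \eqref{eq:Loc15_11} that the paper omits.
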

\begin{proof} 
According to H\"older's inequality, it's
enough to prove \eqref{eq:Loc15} for \(p\in]2, 2+\frac2d[\).
 We first assume that 
\(p\in ]2,\infty)\).
Let \(s\in [1,\infty)\),  \(\theta\in]0,1[\),
\(r\in ]0,1[\) and \(q\in ]0,1[\) such that
\([\L^s(0,T;\L^2(\Omega)),\L^2(0,T;\H^{r}
(\Omega))]_{\theta}=\L^{p}(0,T;\H^{q}(\Omega))\),
which is
\begin{subequations}
\label{eq:Loc16}
\begin{align}
\label{eq:Loc16_1}
&\frac{1}p
=\frac{1-\theta}s+\frac{\theta}2,\\&
\label{eq:Loc16_2}
q=\theta r,
\end{align}
\end{subequations}
where \([\cdot, \cdot ]_{\theta}\) 
denotes
the holomorphic interpolation fonctor 
(see \cite[p.~107]{BeLo76ISSV}). Note that the existence of such 
\(s, \theta, r, q\) is granted by
the condition \(p\in ]2,\infty)\). Now, 
for any \(v \in {\dot{\E}}_T\), the interpolation
and the Young inequalities give
\begin{align}
\norm[\L^p(0,T;\H^{q}(\Omega))]{v}
&\leq C\norm[\L^s(0,T;\L^2(\Omega))]{v}^{1-\theta}
\norm[\L^2(0,T;\H^{r}(\Omega))]{v}^{\theta}\nonumber\\&
\leq C\big(\norm[\L^{s}(0,T;\L^2(\Omega))]{v}+
\norm[\L^2(0,T;\H^{r}(\Omega))]{v}\big)
\nonumber\\&
\leq C\norm[\E_T^{\alpha}]{v}\label{eq:Loc17}
\end{align}
for some \(\alpha = \alpha_{s, r}
\in (0,1)\)
(see \eqref{spacil}).
We now turn to determine the best
exponents \(p\) and \(q\)  in \eqref{eq:Loc17}. 
Notice that a limiting value for 
\(\theta\) in \eqref{eq:Loc16_1}
is \(\theta = \frac2p\). Hence
\(q= \frac2p\) is the corresponding
limiting value
for \(q\) in \eqref{eq:Loc16_2}.
Therefore, we can take
\(q=\frac{p}2 - \eta\), with 
\(\eta >0\) arbitrary small. 
Finally, we have
\begin{equation}
\label{blup}
v\in \L^p(0, T, 
\H^{2/p - \eta}(\Omega)).
\end{equation}
We now restrict to
\(1 \leq p < 2+\frac2d\) and  
write \(p^{-1} = \frac{d}{2(d+1)}+
\delta\) with \(\delta > 0\).
Choose \(\eta = \frac{(1+d)\delta}2\).
With these notations, we easily
compute
\begin{equation}
\label{blip}
\begin{aligned}
\dfrac{2}{p}-\eta -
d\Bigl(\dfrac{1}{2}-\dfrac{1}{p}\Bigr)
=
\dfrac{1}{p} + 
\eta.
\end{aligned}
\end{equation}
It follows from \eqref{blip}, Sobolev 
injections and trace lemmas that
\begin{equation*}
\L^p(0, T, 
\H^{2/p - \eta}(\Omega))
\inj 
\L^p(0, T, 
\W^{1/p + 
\eta,p}(\Omega))
\rightarrow 
\L^p(0, T, 
\L^{p}(\Bd)),
\end{equation*}
where the last arrow is also continuous.
Together with \eqref{blup} and \eqref{eq:Loc17}, it proves
\eqref{eq:Loc15}.

We now prove \eqref{eq:Loc15_10}. 
According to \eqref{eq:Loc7_2} and  \eqref{spacil}, we see that 
\begin{equation}
\label{eq:Loc15_13}
\norm[\L^m(0,T;\W^{1,\infty}(\Omega))]{\mathcal{V}-\bar{\mathcal{V}}}\leq
C_T\norm[\L^m(0,T;\L^2(\Omega))]{v-\bar{v}}\leq
C\norm[\E_T^{\alpha}]{v-\bar{v}}
\end{equation}
 for some \(\alpha = \alpha_{m}\).
 Now, inequality \eqref{eq:Loc15_10} follows from 
 \eqref{eq:Loc15_13} and a trace lemma.
 The proof of inequality \eqref{eq:Loc15_11}
is omited.
\end{proof}
In the sequel, we denote \(\mathcal{H}\eqldef 2+\frac2d\).
The following technical lemma will be used in the proof of the general existence theorem. 
It has essentially the same meaning as Lemma
\ref{Lm1007}, i.e.  boundary integrals of 
\( \vert u^i \vert^{p}\) can be bounded by
(functions of) \( \norm[\E_T]{u}\) 
for \(0\leq p <2+\frac2d\).
\begin{lemma}
\label{Lm7}
Assume that conditions
{\emph{(A--1)}} is satisfied. 
 Assume 
that  \(a\geq 0\), \(b\geq 0\)
\(1\leq a + b< \mathcal{H}\)
and \(\theta \geq 0\).
 Let 
 \(R>0\). There exists 
two constants \(C=C_{T, R, \Vert \mathcal{B} \Vert, a, b ,\theta}>0\) 
and \(\alpha =\alpha_{\theta, b}
\in (0, 1)\)
such that, for any \(u=(u^1,\ldots,u^n)\in\mathcal{C}_{T,R}\) and
\(\bar{u}=(\bar{u}^1,\ldots,\bar{u}^n)\in\mathcal{C}_{T,R}\),
we have
\begin{subequations}
\label{eq:Loc14}
\begin{align}
\label{eq:Loc0014}
&\int_{\Gamma_t}
(1+\abs{u^i}^{a})
\abs{u^j-\bar{u}^j}^b 
\abs{\mathcal{V}-
\bar{\mathcal{V}}}^{\theta}
\dd\mu\dd\tau
\leq
C\norm[\E_T^{\alpha}]{u-\bar{u}}^{b+\theta},
\\&
\label{eq:Loc014}
\int_{\Gamma_t}
(1+\abs{u^i}^{a})
\abs{u^j}^b 
\abs{\mathcal{V}}^{\theta}
\dd\mu\dd\tau
\leq
C,
\end{align}
\end{subequations}
for any \((i, j) \in \{1, \ldots, n\}^2\)
and \(t\in(0, T)\). 
As usual, we have written
\(\mathcal{V}(t)= \mathcal{B}(t, u(t))\) and
\(\bar{\mathcal{{V}}}(t)= \mathcal{B}(t, \bar{u}(t))\)
for a.e \(t\in]0,T]\). 
\end{lemma}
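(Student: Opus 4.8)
The plan is to reduce both inequalities to the subcritical trace estimates of Lemma~\ref{Lm1007} by means of a H\"older splitting whose feasibility is dictated exactly by $a+b<\mathcal{H}$. First I would record two facts valid uniformly on the ball $\mathcal{C}_{T,R}$. Since $\E_T=\E_T^0\inj\E_T^\alpha$ for every $\alpha\in(0,1]$ (see \eqref{spacil}), one has $\norm[\E_T^\alpha]{u}\leq C_\alpha R$ for all $u\in\mathcal{C}_{T,R}$; and since (A--1) and \eqref{eq:Loc7_1} with $s=2$ give $\norm[\W^{1,\infty}(\Omega)]{\mathcal{V}(t)}\leq C\norm[\bL^2(\Omega)]{u(t)}+C\leq CR+C$ for a.e.\ $t$, passing to the trace yields $\norm[\L^\infty(\Gamma_T)]{\mathcal{V}}\leq C_{R,T}$, and likewise for $\bar{\mathcal{V}}$.

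For \eqref{eq:Loc014} I would discard the bounded factor $\abs{\mathcal{V}}^\theta\leq C_{R,T}^\theta$, expand $1+\abs{u^i}^a$, and control $\int_{\Gamma_t}\abs{u^i}^a\abs{u^j}^b\dd\mu\dd\tau\leq\norm[\L^{a+b}(\Gamma_T)]{u^i}^a\norm[\L^{a+b}(\Gamma_T)]{u^j}^b$ by H\"older with conjugate exponents $\tfrac{a+b}{a},\tfrac{a+b}{b}$ (the degenerate case $a=0$ being immediate). As $1\leq a+b<\mathcal{H}$, estimate \eqref{eq:Loc15} bounds each factor by $C\norm[\E_T^\alpha]{u}\leq CR$; the leftover term $\int_{\Gamma_t}\abs{u^j}^b$ is handled the same way, embedding $\L^1(\Gamma_T)\inj\L^b(\Gamma_T)$ first if $b<1$.

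For the main inequality \eqref{eq:Loc0014} the core is the three-factor H\"older inequality
\[
\int_{\Gamma_t}(1{+}\abs{u^i}^a)\abs{u^j{-}\bar{u}^j}^b\abs{\mathcal{V}{-}\bar{\mathcal{V}}}^\theta\dd\mu\dd\tau
\leq
\norm[\L^{p_1}(\Gamma_T)]{1{+}\abs{u^i}^a}\,
\norm[\L^{bp_2}(\Gamma_T)]{u^j{-}\bar{u}^j}^b\,
\norm[\L^{\theta p_3}(\Gamma_T)]{\mathcal{V}{-}\bar{\mathcal{V}}}^\theta
\]
with $p_1^{-1}+p_2^{-1}+p_3^{-1}=1$, the exponents chosen so that $ap_1<\mathcal{H}$ and $bp_2<\mathcal{H}$. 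Writing $x_\ell=p_\ell^{-1}$, this requires $x_1>a/\mathcal{H}$, $x_2>b/\mathcal{H}$, $x_3>0$ with $x_1+x_2+x_3=1$, which is possible precisely because $(a+b)/\mathcal{H}<1$. With such a choice: the first factor is $\leq C(1+\norm[\L^{ap_1}(\Gamma_T)]{u^i}^a)\leq C_R$ by \eqref{eq:Loc15}; the second is $\leq C\norm[\E_T^\alpha]{u-\bar{u}}^b$ by \eqref{eq:Loc15}; and the third is $\leq C\norm[\E_T^\alpha]{u-\bar{u}}^\theta$ by \eqref{eq:Loc15_10} with $m=\theta p_3$ (in each case first embedding into $\L^1(\Gamma_T)$ should the exponent fall below $1$). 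This produces the factor $\norm[\E_T^\alpha]{u-\bar{u}}^{b+\theta}$, and the degenerate cases $a=0$, $b=0$ or $\theta=0$ simply omit the corresponding slot.

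The only genuine obstacle is the combinatorial choice of $p_1,p_2,p_3$: this is where the hypothesis $a+b<\mathcal{H}$ is indispensable and where the subcritical trace exponent of Lemma~\ref{Lm1007} is used to the limit. A secondary point is the bookkeeping of the interpolation index: Lemma~\ref{Lm1007} furnishes possibly different indices $\alpha'$ for $bp_2$ and $\alpha''$ for $\theta p_3$, but taking $\alpha=\min(\alpha',\alpha'')$ and invoking the monotonicity $\E_T^{\alpha_1}\inj\E_T^{\alpha_2}$ for $\alpha_1\leq\alpha_2$ (hence $\norm[\E_T^{\alpha'}]{\cdot}\leq C\norm[\E_T^\alpha]{\cdot}$) lets both difference factors be measured by the single $\alpha\in(0,1)$ recorded as $\alpha_{\theta,b}$. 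Everything else is routine H\"older together with the a priori bound $\norm[\E_T^\alpha]{u}\leq C_\alpha R$.
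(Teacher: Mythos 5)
Your proof is correct and follows essentially the same route as the paper: both reduce \eqref{eq:Loc14} to the subcritical trace estimates \eqref{eq:Loc15} and \eqref{eq:Loc15_10} of Lemma~\ref{Lm1007} via a H\"older splitting whose feasibility rests exactly on $a+b<\mathcal{H}$. The only cosmetic difference is that you use a single three-factor H\"older inequality with exponents $p_1,p_2,p_3$, whereas the paper first peels off the $\abs{\mathcal{V}-\bar{\mathcal{V}}}^{\theta}$ factor with a near-$1$ exponent $(1+\epsilon)$ and then splits the remaining integral $\mathcal{I}((1+\epsilon)a,(1+\epsilon)b,0)$ with a second H\"older step; the exponent bookkeeping (including the $\L^1$ embedding when an exponent drops below $1$ and the choice of a common $\alpha$) is handled equivalently in both.
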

\begin{proof}
In this proof, we denote with a prime a conjugate exponent.
It is enough to prove inequality \eqref{eq:Loc14} for \(t=T\) and to estimate 
the integral 
\begin{equation*}
\mathcal{I}(a, b, \theta)
= \int_{\Gamma_T}
\abs{u^i}^{a}
\abs{u^j-\bar{u}^j}^b 
\abs{\mathcal{V}-
\bar{\mathcal{V}}}^{\theta}
\dd\mu\dd\tau.
\end{equation*}
We restrict to the case 
\(a > 0\) and \(b > 0\),
since the cases 
\(a = 0\) or \(b=0\) are easier.
Let \(\epsilon >0\)
such that 
\(1\leq (1+\epsilon)(a+b)< \mathcal{H}\).
The H\"older inequality leads to
\begin{equation*}
\mathcal{I}(a, b, \theta)
\leq\norm[\L^{(1+\epsilon)'\theta
}(\Gamma_T)]{\mathcal{V}
-\bar{\mathcal{V}}}^{\theta}
\mathcal{I}((1+\epsilon)a, (1+\epsilon)b, 0))^{\frac{1}{1+\epsilon}}.
\end{equation*}
Hence, by using \eqref{eq:Loc15_10}, we find
\begin{equation}
\label{zerof}
\mathcal{I}(a, b, \theta)
 \leq C 
 \norm[\E_T^{\alpha}]{u-\bar{u}}^{\theta} \mathcal{I}((1+\epsilon)a, 
 (1+\epsilon)b, 0))^{\frac{1}{1+\epsilon}}.
\end{equation}
Therefore, setting 
\(a_1\eqldef(1+\epsilon)a>0\) and
\(b_1 \eqldef (1+\epsilon)b> 0\), and recalling
that \(1\leq a_1+b_1 < \mathcal{H}\),
we just
have to prove 
\eqref{eq:Loc0014}
for 
$
\mathcal{I}(a_1, b_1, 0)$,
or simply 
$
\mathcal{I}(a, b, 0)$
which we now estimate.
Since \(a>0\), $b>0$ and
$1\leq a+b<\mathcal{H}$, we have $\frac{a\mathcal{H}}{\mathcal{H}-b}<\mathcal{H}$.
Let $\gamma\in \bigl]\max\bigl(1, 
\frac{a\mathcal{H}}{\mathcal{H}-b}\bigr), \mathcal{H}\bigr[$. For 
\(u\in \mathcal{C}_{T, R}\), inequality
\eqref{eq:Loc15} and \eqref{spacil} ensure that 
\(u\in\L^{\gamma}((0, T)\times \Bd)\)
with 
\begin{equation}
\label{oubli}
\norm[\L^{\gamma}(\Gamma_T)]{u^j}\leq C_{R, T}.
\end{equation}
Set \(q = \frac{\gamma}{a} >\frac{\mathcal{H}}{\mathcal{H}-b} > 1\). We have 
\begin{equation}
\label{zetrof}
q' < \Bigl(\frac{\mathcal{H}}{\mathcal{H}-b}\Bigr)' = \frac{\mathcal{H}}{b}.
\end{equation}
By H\"older inequality
\begin{equation}
\label{zettrof}
\mathcal{I}(a, b, 0)
 \leq 
 \norm[\L^{\gamma}(\Gamma_T)]{u^i}^a
  \norm[\L^{bq'}(\Gamma_T)]{u^j-\bar{u}^j}^b,
\end{equation}
with a slight abuse of notation in the case
\(0< bq'<1\). Appealing to \eqref{zetrof} and  \eqref{eq:Loc15} (and H\"older inequality
in the case \(0< bq'<1\)),
we find that
\begin{equation}
\label{zetttrof}
  \norm[\L^{bq'}(\Gamma_T)]{u^j-\bar{u}^j}^b
  \leq C
  \norm[\E_T^{\alpha}]{u-\bar{u}}^b.
\end{equation}
The estimate on 
\(\mathcal{I}(a, b, 0)\) follows from
\eqref{oubli},
\eqref{zettrof} and \eqref{zetttrof}.
The proof of \eqref{eq:Loc014} is similar.
\end{proof}
Our new assumptions are motivated by Lemma  \ref{Lm7}. 
Assumptions (A--1) and (A--3) are not modified,
while assumption (A--2) becomes
\begin{enumerate}[(\text{A}--4)]
\item The fluxes \(\sigma^i:  [0,\infty)\times\Bd\times \Er\times\Er
\rightarrow\Er\) are measurable, locally bounded functions. Moreover,
there exists \(\rho\in [0, 1+\frac2{d})\) such that
\begin{equation}
\label{eq:015}
\begin{aligned}
&\forall M>0,\quad\exists K_M>0: \\
&\forall (t,x)\in[0,M]\times\Bd,\quad
\forall (v,\psi)\in  \Er\times[-M,M], \quad\forall 
(\bar{v},\bar{\psi})\in \Er\times[-M,M]:\\&
\abs{\sigma^i(t,x,v,\psi)-\sigma^i(t,x,\bar{v},\bar{\psi})}\\&
\leq K_M
((1+\abs{v}^{\rho}+\abs{\bar{v}}^{\rho})
\abs{v-\bar{v}}+
(1+\abs{v}^{\rho+1}+
\abs{\bar{v}}^{\rho+1})
\abs{\psi-\bar{\psi}}).
\end{aligned}
\end{equation}
\end{enumerate}
As an immediate consequence of Lemma \ref{Lm7}, we have
\begin{corollary}
\label{fluxls}
Let \(T>0\) and \(R>0\). Assume that the conditions
{\emph{(A--1)}} and {\emph{(A--4)}} are satisfied. Then

\begin{enumerate}[(i)]

\item For any \(1\leq s < \frac{\mathcal{H}}{\rho+1}\), 
there exist \(\alpha =\alpha_{s(\rho+1)}
\in(0, 1)\) such that the application \(\mathcal{G}:
\mathcal{C}_{T, R}
\rightarrow \bL^s(\Gamma_T)\) with
\((\mathcal{G}(v))(t, x) \eqldef
\sigma\big(t, x, v(t, x), \mathcal{B}(t,v(t))(x)\big)\)
is well defined and Lipschitz continuous
for \(\mathcal{C}_{T, R}\)
endowed with the \(\E_T^{\alpha}\)
norm.

\item Assume that \(\rho\in[0, \frac{2}{d})\). For any $u\in \mathcal{C}_{T, R}$,
$\bar{u}\in \mathcal{C}_{T, R}$,
$\mathcal{V}(t) \eqldef \mathcal{B}(t, u(t))$ and 
$\bar{\mathcal{V}}(t) \eqldef \mathcal{B}(t,\bar{u}(t))$ for a.e $t\in(0, T)$, we have
\begin{equation*}
\begin{aligned}
&\int_{\Gamma_t}
\vert
\sigma(\tau, x, u(\tau, x), 
\mathcal{V}(\tau, x))
-
\sigma(\tau, x, \bar{u}(\tau, x), 
\bar{\mathcal{V}}(\tau, x))
\vert_1
\vert
u -
\bar{u}(\tau, x)
\vert_1
\dd\mu \dd\tau\\
&\leq C\norm[\L^2(0,t;\mathbf{L}^2(\Omega))]{u-\bar{u}}^{2}+\eta
\norm[\E_t]{u-\bar{u}}^{2}.
\end{aligned}
\end{equation*}

\end{enumerate}

\end{corollary}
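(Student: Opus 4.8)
The plan is to deduce both statements from assumption (A--4) and the weighted boundary estimates of Lemma \ref{Lm7}, the only preliminary being the boundedness of the potentials on the lateral boundary. For $v\in\mathcal{C}_{T,R}$ the bound \eqref{eq:Loc7_1} of Lemma \ref{Lm4} gives $\norm[\W^{1,\infty}(\Omega)]{\mathcal{V}(t)}\leq C\norm[\bL^2(\Omega)]{v(t)}+C\leq C_{R}$ uniformly in $t$ (since $\mathcal{C}_{T,R}$ is bounded in $\L^\infty(0,T;\bL^2(\Omega))$), whence, by the trace embedding $\W^{1,\infty}(\Omega)\hookrightarrow\L^\infty(\Bd)$, one has $\norm[\L^\infty(\Gamma_T)]{\mathcal{V}}\leq C_{R,T}$. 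Thus the variable $\psi=\mathcal{V}$ stays in a fixed interval $[-M,M]$ with $M=C_{R,T}$, which is exactly what makes (A--4) applicable with a uniform constant $K_M$ throughout.

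For part (i) I would first check that $\mathcal{G}$ is well defined: comparing with $\sigma^i(t,x,0,0)$ in \eqref{eq:015} and using $|\mathcal{V}|\leq M$ gives $|\sigma^i(t,x,v^i,\mathcal{V})|\leq C(1+|v^i|^{\rho+1})$ on $\Gamma_T$, so $|\sigma^i|^s\leq C(1+|v^i|^{s(\rho+1)})$; since $s(\rho+1)<\mathcal{H}$, the trace bound \eqref{eq:Loc15} together with \eqref{spacil} yields $v^i\in\L^{s(\rho+1)}(\Gamma_T)$ and hence $\mathcal{G}(v)\in\bL^s(\Gamma_T)$. For the Lipschitz property I would raise \eqref{eq:015} to the power $s$ and integrate over $\Gamma_T$, producing two families of terms, $\int_{\Gamma_T}(1+|v^i|^{s\rho}+|\bar v^i|^{s\rho})|v^i-\bar v^i|^s$ and $\int_{\Gamma_T}(1+|v^i|^{s(\rho+1)}+|\bar v^i|^{s(\rho+1)})|\mathcal{V}-\bar{\mathcal{V}}|^s$. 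The first is of the form \eqref{eq:Loc0014} with $(a,b,\theta)=(s\rho,s,0)$, the second with $(a,b,\theta)=(s(\rho+1),0,s)$; in both cases $a+b=s(\rho+1)<\mathcal{H}$ and $b+\theta=s$, so Lemma \ref{Lm7} controls each by $C\norm[\mathrm{E}_T^{\alpha}]{v-\bar v}^s$ (the terms carrying $\bar v$ in the weight being treated by exchanging the roles of $v$ and $\bar v$, which is legitimate since both lie in $\mathcal{C}_{T,R}$). Choosing $\alpha$ to be the smaller of the two exponents supplied by Lemma \ref{Lm7} — both governed by $s(\rho+1)$ and lying in $(0,1)$ — and using the monotonicity of the $\mathrm{E}_T^{\alpha}$ norms in $\alpha$, I get $\norm[\bL^s(\Gamma_T)]{\mathcal{G}(v)-\mathcal{G}(\bar v)}\leq C\norm[\mathrm{E}_T^{\alpha}]{v-\bar v}$, which is the claim.

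Part (ii) starts identically: expanding $|\sigma(u,\mathcal{V})-\sigma(\bar u,\bar{\mathcal{V}})|_1$ by \eqref{eq:015} and multiplying by $|u-\bar u|_1=\sum_j|u^j-\bar u^j|$ gives, after the elementary estimate $|u^i-\bar u^i|\,|u^j-\bar u^j|\leq\frac12(|u^i-\bar u^i|^2+|u^j-\bar u^j|^2)$, integrals of type $\int_{\Gamma_t}(1+|u^i|^{\rho})|u^k-\bar u^k|^2$ and of type $\int_{\Gamma_t}(1+|u^i|^{\rho+1})|\mathcal{V}-\bar{\mathcal{V}}|\,|u^k-\bar u^k|$. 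These are again covered by \eqref{eq:Loc0014}, now with $(a,b,\theta)=(\rho,2,0)$ and $(a,b,\theta)=(\rho+1,1,1)$ respectively; the admissibility condition $a+b=\rho+2<\mathcal{H}$ is precisely the standing hypothesis $\rho<\frac{2}{d}$, while $b+\theta=2$ in both cases. Lemma \ref{Lm7} thus delivers the intermediate bound $C\norm[\mathrm{E}_T^{\alpha}]{u-\bar u}^2$ for some $\alpha\in(0,1)$.

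The remaining step — turning this intermediate bound into the Gr\"onwall-ready right-hand side $C\norm[\L^2(0,t;\bL^2(\Omega))]{u-\bar u}^2+\eta\norm[\mathrm{E}_t]{u-\bar u}^2$ — is the one that needs care and is the main obstacle. A blunt interpolation of $\mathrm{E}_T^{\alpha}$ between $\mathrm{E}_t$ and $\L^2(0,t;\bL^2(\Omega))$ is not available, because the $\L^\infty$-in-time component of the $\mathrm{E}_T^{\alpha}$ norm cannot be absorbed into a small multiple of $\norm[\mathrm{E}_t]{\cdot}$ plus a multiple of $\norm[\L^2(0,t;\bL^2(\Omega))]{\cdot}$. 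Instead I would re-run the trace estimate for the difference factor while keeping the $\L^2(\Bd)$ endpoint, exactly as in \eqref{Norbert}--\eqref{naze}: pull the polynomial weight $(1+|u^i|^{\rho})$ out by H\"older in a Lebesgue space over $\Gamma_t$, bounded by a constant $C_{R,T}$ thanks to $u\in\mathcal{C}_{T,R}$ and \eqref{eq:Loc15} (here $\rho<\frac{2}{d}$ keeps the relevant exponent below $\mathcal{H}$), estimate the remaining factor by the trace inequality $\norm[\L^2(\Bd)]{w}\leq C\norm[\H^1(\Omega)]{w}^{1/2}\norm[\L^2(\Omega)]{w}^{1/2}$, and close by Young's inequality, which produces the arbitrarily small coefficient $\eta$ in front of $\norm[\mathrm{E}_t]{u-\bar u}^2$ and the companion constant $C$ in front of $\norm[\L^2(0,t;\bL^2(\Omega))]{u-\bar u}^2$. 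The delicate point throughout is the bookkeeping of the H\"older exponents: the weight is integrable on $\Gamma_t$ only up to order $<\mathcal{H}$, so the integrability budget must be split so as not to over-consume it, and it is precisely $\rho<\frac{2}{d}$ (equivalently $\rho+2<\mathcal{H}$) that leaves enough room to carry the difference factor through the $\L^2(\Bd)$ trace endpoint.
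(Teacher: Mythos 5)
Your treatment of (i) and of the decomposition in (ii) follows the paper's own route: the uniform bound \(\norm[\L^{\infty}(\Gamma_T)]{\mathcal{V}}\leq C_{R,T}\) fixes the constant \(K_M\) in (A--4), and the resulting boundary integrals are exactly instances of \eqref{eq:Loc0014} with the parameter choices you list; the exponent bookkeeping (\(a+b=s(\rho+1)<\mathcal{H}\) for (i), and \(a+b=\rho+2<\mathcal{H}\Leftrightarrow\rho<\frac2d\) for (ii)) is correct and is essentially all the paper's one-line proof says.

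The divergence is in the last step of (ii). The paper closes by invoking, for \(\alpha\in(0,1)\), the inequality \(\norm[\E_t^{\alpha}]{\cdot}^2\leq C_{\alpha,\eta}\norm[\L^2(0,t;\mathbf{L}^2(\Omega))]{\cdot}^2+\eta\norm[\E_t]{\cdot}^2\). You are right to be uneasy about this as a statement about the full \(\E_t^{\alpha}\) norm (a sharp bump in time shows the \(\L^{\infty}(0,t;\mathbf{H}^{-\alpha}(\Omega))\) component cannot be absorbed in this way); but the quantity Lemma \ref{Lm7} actually controls factors through \eqref{eq:Loc17}, i.e.\ through \(\norm[\L^s(0,T;\L^2(\Omega))]{v}^{1-\theta}\norm[\L^2(0,T;\H^{r}(\Omega))]{v}^{\theta}\), and each of these factors interpolates multiplicatively between \(\L^2(\mathcal{Q}_t)\) and \(\E_t\) with a strictly positive \(\L^2(\mathcal{Q}_t)\)-exponent, so Young's inequality delivers precisely the asserted right-hand side. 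Your proposed substitute, by contrast, has a genuine exponent problem when \(\rho>0\): after pulling out the weight \((1+\abs{u^i}^{\rho})\) by H\"older with a finite exponent \(q\) (finite because \(u^i\notin\L^{\infty}(\Gamma_t)\)), the difference factor sits in \(\L^{2q'}(\Gamma_t)\) with \(q'>1\) strictly, so the plain \(\L^2(\Bd)\) trace inequality \eqref{Norbert} you quote does not apply to it. You need either a Gagliardo--Nirenberg trace inequality \(\norm[\L^{2q'}(\Bd)]{w}\leq C\norm[\H^1(\Omega)]{w}^{\beta}\norm[\L^2(\Omega)]{w}^{1-\beta}\) with \(\beta<1\) followed by H\"older in time, or, more directly, the product-form bound \(\norm[\L^p(\Gamma_t)]{v}\leq C\norm[\L^2(\mathcal{Q}_t)]{v}^{\delta}\norm[\E_t]{v}^{1-\delta}\) (\(\delta>0\)) already implicit in the proof of Lemma \ref{Lm1007}. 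With that replacement your argument closes; as written, the endpoint you invoke only covers the case \(\rho=0\).
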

 \begin{proof}
 Property {\emph{(i)}} is a direct
 consequence of the property
 (A--4) and Lemma \ref{Lm7}. Similarly, {\emph{(ii)}} follows from  the property
 (A--4), the inequality \eqref{eq:Loc0014}
 and, for \(\alpha \in (0, 1)\), the inequality
\begin{equation*}
\norm[\E_t^{\alpha}]{\cdot}^2\leq C_{\alpha, \eta}\norm[\L^2(0,t;\mathbf{L}^2(\Omega))]{\cdot}^{2}+\eta
\norm[\E_t]{\cdot}^{2}.
\end{equation*} 
  \end{proof}
  
  
\section{Global existence: the general case}
\label{lasection}

Assume that conditions (A--1), (A--3) and (A--4)
are satisfied for the fluxes 
\(\sigma^i\). We still denote by 
\(\Lambda_{T}\) and  
\(k_j\), \(j=1,\ldots, n\), the constants appearing in the condition
(A--3).
In order to apply Corollary~\ref{GE:Cor1}, we define a family 
of functions \(\sigma^i_p:  [0,\infty)\times\Bd\times \Er\times\Er
\rightarrow\Er\), \(p\in\mathbb{N}^*\), 
\(i\in\{1,\ldots, n\}\) endowed with conditions 
(A--2) and (A--3). Let \(h\in\mathscr{D}(\mathbb{R})\) with
\(h(x)=1\) for \(\vert x \vert\leq 1\),  
\(h(x) = 0\) for \(\vert x \vert\geq2\)
 and \(0 \leq h \leq 1\). For any  
\((t, x, v, \psi)\in [0,\infty)\times\Bd\times \Er\times\Er
\rightarrow\Er\) set 
 \begin{equation*}
 \sigma^i_p(t, x, v, \psi) 
 = \sigma^i(t, x, v, \psi)h\Bigl(\frac{v}{p}\Bigr). 
 \end{equation*}
As easily checked, the function \(\sigma^i_p\)
satisfies the two conditions (A--2) and (A--3),   
with constants \(\Lambda_{T}^p = \Lambda_{T}\) and  
\(k_j^p = k_j\)
\((j=1,\ldots, n)\) independent of  \(p\).  Moreover, for any 
\(p\in \{1, \ldots, n\}\), we have
\(\vert \sigma_p \vert
\leq \vert \sigma \vert\).

Let now \(T>0\), and let 
\(u_0\in \bfL^2(\Omega)\) with 
 \(u_0^i\geq 0\) for \(i\in\{1,\ldots,n\}\).
For any \(p\in\mathbb{N}^*\), Corollary \ref{GE:Cor1}
asserts the existence of a unique solution \(u_p\in \E_T\) to problem 
\((\mathcal{P}_T)\). 
Now, it follows from the Lemma \ref{GE:Lemma3}
that the sequence
\(\{\Vert u_p \Vert_{\E_T}\}
_{p\in \mathbb{N}^*}\) is bounded.
In the sequel, we denote by \(R\eqldef\sup_{p\in\mathbb{N}^*}
\Vert u_p \Vert_{\E_T}<\infty\).
Thus, for any \(p\in\mathbb{N}^*\), we have
\begin{equation}
\label{Interieur}
u_p\in\mathcal{C}_{T, R},
\end{equation}
and by Lemma \ref{Lm4} and a trace lemma, we get
\begin{equation}
\label{Trace}
\norm[\L^{\infty}(0, T,
\mathbf{W}^{1, \infty}(\Omega)\cap
\mathbf{W}^{2, 1}(\Omega))]{\mathcal{V}_p}
+\norm[\L^{\infty}(\Gamma_T)]{\mathcal{V}_p}
\leq C_{R,T}.
\end{equation}
This allows us to use all the previous results
of the paper. The rest of this section is devoted to the
proof of the convergence of 
\(\{u_p\}_{p\in\mathbf{N}^*}\) towards an exact solution.
\begin{lemma}
\label{AubinLions}
Under the assumptions
\emph{(A--1)}, \emph{(A--2)} and \emph{(A--4)}, and with the previous
notations, there exists \(u\in\mathcal{C}_{T, R}\)
such that, extracting if necessary a subsequence
\begin{subequations}
\begin{align}
\label{cuve}
&u_p\underset{p\rightarrow \infty}{\longrightarrow} u  
\text{ strongly in }
 {\dot{\E}}_T \text{ and }\C^0(0, T, 
\mathbf{H}^{-1/4}(\Omega)),\\
\label{cave}
&u_p\underset{p\rightarrow \infty}{\longrightharpoonup} u  
\text{ weakly in }
\L^2(0, T, \mathbf{H}^1(\Omega))\text{ and }
\text{ weakly--}\star \textrm{ in }
\L^{\infty}(0, T, \mathbf{L}^2(\Omega)).
\end{align}
\end{subequations}
  \end{lemma}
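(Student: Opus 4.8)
The plan is to establish the convergences in \eqref{cuve} and \eqref{cave} by combining the uniform bounds from Lemma \ref{GE:Lemma3} with the Aubin--Lions compactness lemma. First I would extract from the uniform bound $R = \sup_p \norm[\E_T]{u_p} < \infty$ the existence of a limit $u$ in the weak topologies: since $\{u_p\}$ is bounded in $\L^2(0,T;\bH^1(\Omega))$ and in $\L^\infty(0,T;\bL^2(\Omega))$, the Banach--Alaoglu theorem yields a subsequence converging weakly in $\L^2(0,T;\bH^1(\Omega))$ and weakly-$\star$ in $\L^\infty(0,T;\bL^2(\Omega))$ to some $u$. This gives \eqref{cave} directly, and lower semicontinuity of the norm shows $u \in \mathcal{C}_{T,R}$, so $u$ lies in the correct ball.

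The heart of the argument is the strong convergence \eqref{cuve}, for which I would invoke Aubin--Lions. The spatial regularity is already in hand: $\{u_p\}$ is bounded in $\L^2(0,T;\bH^1(\Omega))$, and $\bH^1(\Omega)$ embeds compactly into $\bH^{1-\alpha}(\Omega)$ for any $\alpha \in (0,1)$ (and into $\bH^{-\alpha}(\Omega)$ as well). What remains is a uniform bound on the time derivative $\tfrac{\dd u_p}{\dd t}$ in some negative-order space. For this I would test the variational formulation $(\mathcal{P}_T)$ against an arbitrary $w$ and bound each term: the bulk term $\int (\nabla u_p + (u_p)_\alpha \otimes \nabla \mathcal{V}_p):\nabla w$ is controlled using the uniform bound $R$ together with the $\L^\infty$ control on $\mathcal{V}_p$ from \eqref{Trace}, while the boundary term $\int_{\Gamma_T} \sigma_p(\cdots)\cdot w$ is handled by the trace estimates of Lemma \ref{Lm7} (applied via Corollary \ref{fluxls}), using $\abs{\sigma_p} \leq \abs{\sigma}$ and (A--4). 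This produces a bound of the form $\norm[\L^2(0,T;(\bH^1(\Omega))')]{\tfrac{\dd u_p}{\dd t}} \leq C_R$, uniform in $p$. Since the paper works in the scale $\E_t^\alpha$ and loses an arbitrarily small order of derivation, Aubin--Lions then gives strong convergence in each $\E_T^\alpha$ for $\alpha \in (0,1)$, hence in the Fr\'echet space $\dot{\E}_T$; interpolation between the uniform time derivative bound and the spatial compactness also yields equicontinuity in $\C^0(0,T;\bH^{-1/4}(\Omega))$ by the Arzel\`a--Ascoli argument embedded in the standard Aubin--Lions--Simon statement.

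The main obstacle I expect is the uniform estimate on the time derivative, specifically the boundary contribution. The drift term $(u_p)_\alpha \otimes \nabla\mathcal{V}_p$ is quadratic-type in $u_p$, so one must verify it is bounded in $\L^2(\mathcal{Q}_T)$ uniformly in $p$; this is where the $\L^\infty$ bound \eqref{Trace} on $\nabla\mathcal{V}_p$ is essential, reducing it to the bound on $u_p$ in $\L^2(\mathcal{Q}_T)$ supplied by $R$. The trace term is more delicate, since $\sigma_p$ has polynomial growth of order $\rho+1$ in its arguments; here the restriction $\rho < 1 + \tfrac2d$ in (A--4) is exactly what makes the trace integrals finite and uniformly bounded, via \eqref{eq:Loc014} of Lemma \ref{Lm7}. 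Once both terms are controlled uniformly, the duality pairing $\int_0^T \langle \tfrac{\dd u_p}{\dd t}, w\rangle$ is bounded by $C_R \norm[\L^2(0,T;\bH^1(\Omega))]{w}$, giving the desired negative-norm bound and completing the hypotheses of Aubin--Lions.
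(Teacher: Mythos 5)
Your overall strategy---the uniform bound \(R=\sup_p\norm[\E_T]{u_p}\) from Lemma \ref{GE:Lemma3} combined with the Aubin--Lions lemma, weak-\(\star\) extraction for \eqref{cave}, and lower semicontinuity to place \(u\) in \(\mathcal{C}_{T,R}\)---is the same as the paper's. The genuine gap is in your uniform time-derivative estimate. You claim \(\norm[\L^2(0,T;(\bH^1(\Omega))')]{\frac{\dd u_p}{\dd t}}\leq C_R\) by controlling the boundary term \(\int_{\Gamma_T}\sigma_p(\cdot,u_p,\mathcal{V}_p)\cdot w\,\dd\mu\dd\tau\) through Lemma \ref{Lm7} and Corollary \ref{fluxls}. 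But Corollary \ref{fluxls}\emph{(i)} only bounds \(\sigma_p(\cdot,u_p,\mathcal{V}_p)\) uniformly in \(\bL^s(\Gamma_T)\) for \(s<\frac{\mathcal{H}}{\rho+1}\), while the trace of a generic \(w\in\L^2(0,T;\bH^1(\Omega))\) is only in \(\L^2(\Gamma_T)\); the duality pairing is therefore controlled by \(\norm[\L^2(0,T;\bH^1(\Omega))]{w}\) only when one can take \(s\geq 2\), i.e.\ \(\rho<\frac1d\). For \(\frac1d\leq\rho<1+\frac2d\) no uniform-in-\(p\) bound in \((\bH^1(\Omega))'\) is available --- this is precisely why Theorem \ref{Thetheorem} solves \((\mathcal{P}_T)\) only for \(\rho<\frac1d\) and otherwise restricts to the test space \(\mathbf{b}(T,\rho)\). (For each fixed \(p\) the cutoff \(h(v/p)\) makes \(\sigma_p\) bounded, so \(u_p\) does lie in the energy class of \((\mathcal{P}_T)\), but that bound degenerates as \(p\to\infty\).)

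The repair is the paper's: take \(\frac{\dd u_p}{\dd t}\) in the sense of distributions on \(\Omega\), i.e.\ test only against functions vanishing near \(\partial\Omega\), so the boundary flux never enters. Then \(\partial_t u_p=\nabla\cdot\big(\nabla u_p+(u_p)_{\alpha}\otimes\nabla\mathcal{V}_p\big)\) with the flux uniformly bounded in \(\L^2(\mathcal{Q}_T)\) by \eqref{Interieur} together with the \(\W^{1,\infty}\) bound \eqref{Trace} on \(\mathcal{V}_p\) (exactly the point you already identified for the bulk term), which gives a bound on \(\big\{\frac{\dd u_p}{\dd t}\big\}_p\) in \(\L^2(0,T,\mathbf{H}^{-1}(\Omega))\), uniform in \(p\). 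The Aubin--Lions triple \(\mathbf{H}^{1-\alpha}(\Omega)\hookrightarrow\hookrightarrow\mathbf{H}^{-\alpha}(\Omega)\hookrightarrow\mathbf{H}^{-1}(\Omega)\) only needs this weaker space, and the remainder of your argument (diagonal extraction over \(\alpha\in(0,1)\) for convergence in \(\dot{\E}_T\), and the \(\C^0(0,T,\mathbf{H}^{-1/4}(\Omega))\) convergence) then goes through unchanged.
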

\begin{proof}
Since \(u_p\) satisfies equation 
\eqref{eq:011} in the sense of distributions, we deduce from 
\eqref{Interieur} and \eqref{Trace} that 
\begin{equation}
\label{timmes}
{\Bigl\{\frac{\dd u_p}{\dd t}\Bigr\}}_{p\in\mathbf{N}^*}
\text{ is  bounded in }
\L^2(0, T, \mathbf{H}^{-1}(\Omega)).
\end{equation} 
With \eqref{Interieur}, and using the Aubin-Lions lemma and a 
diagonal process, we extract 
from \(\{u_p\}_{p\in\mathbf{N}^*}\)
a 
converging  (and not relabeled) 
subsequence in 
\({\dot{\E}}_T\).
Still by the Aubin-Lions lemma, we can also
assume that  \(\{u_p\}_{p\in\mathbf{N}^*}\)
converges strongly in \(\C^0(0, T, 
\mathbf{H}^{-1/4}(\Omega))\). Properties
\eqref{cave} and 
\(u\in\mathcal{C}_{T, R}\) follow from \eqref{Interieur}.
\end{proof}
We now introduce
a space of test functions compatible with
the boundary conditions.
Until the end of the paper,
for the sake of clarity,
we sometimes go back to the notation 
\(v_{|_{\partial\Omega}}(t)\) for
the trace of \(v(t)\) 
on \(\partial\Omega\).
For \(T>0\) and \(0\leq \rho \leq 
\mathcal{H}-1\),
we set
\begin{equation}
\label{bto}
\begin{aligned}
\mathbf{b}(T, \rho) \eqldef &\Big{\{}v\in 
\L^2(0, T, \bH^1(\Omega)) \text{ 
such that 
there exists } r>\frac{\mathcal{H}}{\mathcal{H}-(\rho+1)}\\
&\text{ depending of }v, \text{ with } 
v_{|_{\partial\Omega}}
\in \L^r(\Gamma_T)\Big{\}}.
\end{aligned}
\end{equation}
Notice that for 
\(0\leq \rho<\mathcal{H}-2\), we have
\(\frac{\mathcal{H}}{\mathcal{H}-(\rho+1)} < \mathcal{H}\). Hence, by Lemma
\ref{Lm1007}, we have \(\E_T\subset \mathbf{b}(T, \rho)\)
for \(0\leq \rho < 2/d\). It follows that
\begin{equation}
\label{bta}
\H^1([0, T]\times\Omega)\subset \mathbf{b}(T, \rho)
\quad\text{for}\quad 0\leq \rho < 2/d.
\end{equation}
Notice also that, for 
\(0\leq \rho < \frac{\mathcal{H}-2}2\),
we have \(\frac{\mathcal{H}}{\mathcal{H}-(\rho+1)}< 2\).
Hence, \(\L^2(0, T, \bH^1(\Omega))\subset \mathbf{b}(T, \rho)\), 
and since the opposite inclusion is also
true, we have 
\begin{equation}
\label{bti}
\mathbf{b}(T, \rho)=
\L^2(0, T, \bH^1(\Omega)) \quad\text{for}\quad 0\leq \rho < 1/d.
\end{equation}
Last, 
the inclusion 
\begin{equation}
\label{btz}
\C^{1}(\mathcal{Q}_T)\subset \mathbf{b}(T, \rho)
\quad\text{for}\quad 0\leq \rho < 1+2/d
\end{equation}
holds true. We are ready to prove our existence
theorem.
\begin{theorem}
\label{Thetheorem}
Let \(T>0\) be fixed. 
Let \(0\leq \rho < 1+\frac2d\), and
assume that {\emph{(A--1)}}, 
{\emph{(A--3)}} and {\emph{(A--4)}} hold true.  
Let \(u_0\in\bL^2(\Omega)\) with 
 \(u_0^i\geq 0\), \(i\in\{1,\ldots,n\}\).\\ 

\begin{enumerate}[(i)]

\item The problem
\begin{equation*}
(\mathcal{R}_{T})\hspace{1em}
\begin{cases}
\text{Find }u\in \L^2(0,T;\bfH^1(\Omega))
\cap\L^{\infty}([0,T];\bL^2(\Omega)) 
\text{ such that for any}\\ 
w\in\C^1(\mathcal{Q}_T),
\text{ and a.e } t\in (0, T)\\ 
\displaystyle{-
\int_{\mathcal{Q}_t}
u\cdot\partial_t w
\dd x
\dd\tau
+\int_{\mathcal{Q}_t} (\nabla u+
u_{\alpha}\otimes\nabla\mathcal{V})
(\tau, x):\nabla w(\tau, x)\dd x\dd\tau}\\
\displaystyle{=
\int_{\Gamma_t}
\sigma(\tau, x, u(\tau, x),
\mathcal{V}(\tau, x))\cdot w(\tau, x)
\dd\mu \dd\tau+\int_{\Omega}u_0\cdot w(0)\dd x-\int_{\Omega}u(t)\cdot w(t)\dd x,}
\end{cases}
\end{equation*}
\big(with \(\mathcal{V}(\tau) = 
\mathcal{B}(\tau, u(\tau))\) for a.e
\(\tau\in(0, T)\)\big)
admits at least one solution.
In the case \(0\leq \rho< \frac2d\),
one can choose any \(w\in \H^1\big(
(0, T)\times \Omega\big)\) as a test function.

\item Assume that \(0\leq \rho< \frac1d\). 
Then, the problem \((\mathcal{P}_T)\)
admits exactly one solution.
\end{enumerate}
\end{theorem}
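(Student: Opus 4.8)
The plan is to obtain the solution as a limit of the approximate solutions \(u_p\) already constructed before the statement. Recall that the mollified fluxes \(\sigma^i_p\) satisfy (A--2)--(A--3) with constants independent of \(p\), so Corollary~\ref{GE:Cor1} produces a unique \(u_p\in\mathcal{C}_{T,R}\), while Lemma~\ref{GE:Lemma3} bounds \(R\eqldef\sup_p\norm[\E_T]{u_p}\) uniformly and \eqref{Trace} bounds \(\mathcal{V}_p\) in \(\L^\infty(\Gamma_T)\). Lemma~\ref{AubinLions} then supplies a (not relabeled) subsequence with \(u_p\to u\) strongly in \({\dot{\E}}_T\) and in \(\C^0(0,T,\mathbf{H}^{-1/4}(\Omega))\), weakly in \(\L^2(0,T,\mathbf{H}^1(\Omega))\) and weakly-\(\star\) in \(\L^\infty(0,T,\mathbf{L}^2(\Omega))\). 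The whole argument reduces to passing to the limit \(p\to\infty\) in the variational identity satisfied by \(u_p\).

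For part (i), I would first rewrite the identity of \((\mathcal{P}_T)\) for \(u_p\) in the integrated-by-parts form of \((\mathcal{R}_T)\), moving the time derivative onto the test function \(w\in\C^1(\mathcal{Q}_T)\). The linear terms are routine: \(-\int_{\mathcal{Q}_t}u_p\cdot\partial_t w\) converges because \(u_p\to u\) in \(\L^2(\mathcal{Q}_T)\) by \eqref{spacil}; the endpoint term \(\int_\Omega u_p(t)\cdot w(t)\) converges for every \(t\) thanks to the \(\C^0(0,T,\mathbf{H}^{-1/4})\) convergence paired with \(w(t)\in\mathbf{H}^{1/4}(\Omega)\); the diffusion term converges by the weak \(\L^2(0,T,\mathbf{H}^1)\) convergence. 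For the drift term I would use Lemma~\ref{Lm4} with \(s=2\): since \(u_p\to u\) strongly in \(\L^2(\mathcal{Q}_T)\), estimate \eqref{eq:Loc7_2} gives \(\nabla\mathcal{V}_p\to\nabla\mathcal{V}\) strongly in \(\L^2(0,T,\mathbf{L}^\infty(\Omega))\), whence \(u_p^i\,\partial_j\mathcal{V}_p\to u^i\,\partial_j\mathcal{V}\) in \(\L^1(\mathcal{Q}_T)\), and the integral against \(\partial_jw^i\in\L^\infty\) passes to the limit.

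The one delicate term, and the main obstacle, is the nonlinear boundary flux \(\int_{\Gamma_t}\sigma_p(\tau,x,u_p,\mathcal{V}_p)\cdot w\). I would split \(\sigma_p(u_p,\mathcal{V}_p)-\sigma(u,\mathcal{V})\) into the mollification error \(\sigma(u_p,\mathcal{V}_p)\,(h(u_p/p)-1)\) and the genuine increment \(\sigma(u_p,\mathcal{V}_p)-\sigma(u,\mathcal{V})\). The mollification error is supported on \(\{\abs{u_p}>p\}\); since (A--4) forces the growth \(\abs{\sigma}\lesssim 1+\abs{u_p}^{\rho+1}\) (using that \(\mathcal{V}_p\) is bounded in \(\L^\infty(\Gamma_T)\) by \eqref{Trace}) while \eqref{oubli} bounds \(u_p\) uniformly in \(\L^\gamma(\Gamma_T)\) for some \(\gamma<\mathcal{H}\) with \(\gamma>\rho+1\), the estimate \(\abs{u_p}^{\rho+1}\mathds{1}_{\abs{u_p}>p}\le p^{\rho+1-\gamma}\abs{u_p}^{\gamma}\) makes it \(O(p^{-(\gamma-\rho-1)})\to0\). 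For the genuine increment I would combine (A--4) with Lemma~\ref{Lm7}: inequality \eqref{eq:Loc0014}, applied with \((a,b,\theta)=(\rho,1,0)\) and with \((a,b,\theta)=(\rho+1,0,1)\) (and \(w\) bounded), controls the boundary integral by \(C\norm[\E_T^\alpha]{u_p-u}\to0\). The constraint \(\rho<1+\tfrac2d=\mathcal{H}-1\) is exactly what keeps \(a+b<\mathcal{H}\) in both applications. This proves existence for \((\mathcal{R}_T)\) against \(\C^1\) test functions; when \(0\le\rho<\tfrac2d\), inclusion \eqref{bta} gives \(\H^1((0,T)\times\Omega)\subset\mathbf{b}(T,\rho)\), so the trace of such \(w\) lies in a Lebesgue exponent conjugate to that of \(\sigma(u,\mathcal{V})\), the boundary integral stays well defined, and a density argument extends the identity.

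For part (ii), with \(0\le\rho<\tfrac1d\) I would first upgrade the solution from \((\mathcal{R}_T)\) to \((\mathcal{P}_T)\). Here \(\rho<\tfrac1d\) yields \(2(\rho+1)<\mathcal{H}\), so by Lemma~\ref{Lm7} the flux \(\sigma(u,\mathcal{V})\) lies in \(\L^2(\Gamma_T)\); consequently \(\tfrac{\dd u}{\dd t}\in\L^2(0,T,(\mathbf{H}^1(\Omega))')\), and since \eqref{bti} identifies \(\mathbf{b}(T,\rho)=\L^2(0,T,\mathbf{H}^1(\Omega))\), the identity extends by density from \(\C^1\) to all \(w\in\L^2(0,T,\mathbf{H}^1(\Omega))\), which is precisely \((\mathcal{P}_T)\). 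For uniqueness I would take two solutions \(u,\bar u\), subtract, and test with the now admissible choice \(w=u-\bar u\). The diffusion term gives \(\norm[\L^2(\mathcal{Q}_t)]{\nabla(u-\bar u)}^2\); the drift difference is split as above and absorbed via the \(\L^\infty\) bound on \(\nabla\mathcal{V}\), the Lipschitz estimate \eqref{eq:Loc7_2} and Young's inequality; the boundary difference is controlled by Corollary~\ref{fluxls}(ii) (applicable since \(\tfrac1d<\tfrac2d\)), bounding it by \(C\norm[\L^2(0,t;\mathbf{L}^2(\Omega))]{u-\bar u}^2+\eta\norm[\E_t]{u-\bar u}^2\). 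Choosing \(\eta\) small to absorb the gradient term on the left and applying Grönwall's lemma forces \(u=\bar u\).
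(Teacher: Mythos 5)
Your proposal is correct and follows essentially the same route as the paper's proof: the same approximating sequence \(\{u_p\}\), the same compactness from Lemma \ref{AubinLions}, the same splitting of the boundary flux into a cut-off error plus the increment \(\sigma(u_p,\mathcal{V}_p)-\sigma(u,\mathcal{V})\) controlled via the trace estimates of Lemma \ref{Lm7}, and the same density/energy arguments for part \emph{(ii)}. The only cosmetic deviations are that you dispose of the mollification error by a quantitative Chebyshev-type tail bound where the paper uses a.e.\ convergence of the traces and dominated convergence, and that you invoke Lemma \ref{Lm7} directly rather than through Corollary \ref{fluxls}\emph{(i)}.
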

 
\begin{proof} 
We still denote by \(u\) and \(u_p\) the functions of
Lemma \ref{AubinLions}. 
As usual, we write 
\(\mathcal{V}(t) =
\mathcal{B}(t, u(t))\) and 
\(\mathcal{V}_p(t) =
\mathcal{B}(t, u_p(t))\)
for a.e \(t\in (0, T)\).
We must prove that \(u\) is a solution
of problem \(\mathcal{R}_T\) or
\(\mathcal{P}_T\).\\

\noindent{\emph{(i)}} \textbf{Existence in the case }\(\mathbf{0\leq \rho< 1+\frac2d.}\)
 
Let \(w\in\mathbf{b}(T, \rho)\).
Since the sequence
\(\{u_p\}_{p\in\mathbf{N}^*}\)
converges
in \({\dot{\E}}_T\) (see Lemma \ref{AubinLions}),
we deduce from
\eqref{spacil} its convergence
in \(\L^4(0, T ,\L^2(\Omega))\).
Therefore, by Lemma
\ref{Lm4}, we also have 
\(\nabla\mathcal{V}_p\underset{p\rightarrow \infty}{\longrightarrow}
\nabla\mathcal{V}\) in 
\(\L^4(0, T ,\L^{\infty}(\Omega))\).
Last, recall that \(\{u_p\}_{p\in\mathbf{N}^*}\)
converges weakly
in \(\L^2(0, T, \mathbf{H}^1(\Omega)))\). 
As a consequence of these convergences
\begin{equation}
\label{3}
\int_{\mathcal{Q}_T}
\big(\nabla u_p+(u_p)_{\alpha}\otimes
\nabla\mathcal{V}_p\big):\nabla w
\dd x \dd\tau
\underset{p\rightarrow \infty}{\longrightarrow}
\int_{\mathcal{Q}_T}
\big(\nabla u+u_{\alpha}\otimes
\nabla\mathcal{V}\big):\nabla w
\dd x\dd\tau.
\end{equation}
We now prove that 
\begin{equation}
\label{4}
D_p\eqldef\int_{\Gamma_T}
(\sigma_p(\tau, x, u_p(\tau, x)
, \mathcal{V}_p(\tau, x))-
\sigma(\tau, x, u(\tau, x)
, \mathcal{V}(\tau, x)))\cdot w(x)
\dd\mu \dd\tau\underset{p\rightarrow \infty}{\longrightarrow} 0.
\end{equation}
Since \(w\in\mathbf{b}(T, \rho)\),
we have  \(w_{|_{\partial\Omega}}\in
\L^r(\Gamma_T)\) for some 
\(r>\frac{\mathcal{H}}{\mathcal{H}-(\rho+1)}\). Hence, it is
enough to show
that 
\begin{equation}
\label{klou}
J_p\eqldef\int_{\Gamma_T}
\vert\sigma_p(\tau, x, u_p(\tau, x)
, \mathcal{V}_p(\tau, x))-
\sigma(\tau, x, u(\tau, x)
, \mathcal{V}(\tau, x))\vert_1^s
\dd\mu \dd\tau\underset{p\rightarrow \infty}{\longrightarrow} 0,
\end{equation}
for \(s=r' < \Big(\frac{\mathcal{H}}{\mathcal{H}-(\rho+1)}\Big)' = \frac{\mathcal{H}}{\rho+1}\).
Using \(\vert h \vert \leq 1\), we see that
\begin{equation}
\label{5'}
J_p \leq J_{p, 1}+J_{p, 2},
\end{equation}
where
\begin{subequations}
\begin{align}
&
J_{p, 1}\eqldef C_s 
\int_{\Gamma_T}
\vert\sigma(\tau, x, u_p(\tau, x)
, \mathcal{V}_p(\tau, x))-
\sigma(\tau, x, u(\tau, x)
, \mathcal{V}(\tau, x))\vert_1^s
\dd\mu \dd\tau,\\&
J_{p, 2}\eqldef
 C_s
\int_{\Gamma_T}
\vert 1-
h\big(u_p(\tau, x)/p\big)
\vert^s
\vert
\sigma(\tau, x, u(\tau, x)
, \mathcal{V}(\tau, x))\vert_1^s
\dd\mu \dd\tau.
\end{align}
\end{subequations}
Notice that 
\begin{align}
\label{6}
J_{p, 1}\underset{p\rightarrow \infty}{\longrightarrow}0,
\end{align}
due to \(1\leq s<\frac{\mathcal{H}}{\rho+1}\), 
Corollary~\ref{fluxls}{\emph{(i)}}, and 
the convergence of
\(\{u_p\}_{p\in\mathbf{N}^*}\)
in \({\dot{\E}}_T\). Next, 
since
\(1\leq s < \mathcal{H}\), invoking again
the convergence of
\(\{u_p\}_{p\in\mathbf{N}^*}\)
in \({\dot{\E}}_T\)
and \eqref{eq:Loc15},
we obtain the following convergence
\begin{equation}
\label{1'}
u_{p_{|_{\Bd}}}\underset{p\rightarrow \infty}{\longrightarrow}
u_{|_{\Bd}}
\quad\text{in}\quad\L^s(\Gamma_T).
\end{equation}
Extracting if necessary 
a subsequence, we get
\(u_{p{|_{\Bd}}} \underset{p\rightarrow \infty}{\longrightarrow} u_{|_{\Bd}}
\) a.e. Since \(h(0) = 0\), we finally obtain
\begin{equation}
\label{7}
\vert 1-
h\big(u_p(\tau, x)/p\big)
\vert^s
\vert
\sigma(\tau, x, u(\tau, x)
, \mathcal{V}(\tau, x))\vert_1^s
\underset{p\rightarrow \infty}{\longrightarrow} 0,
\end{equation}
for a.e \((\tau, x)\in\Gamma_T\). Moreover, 
\begin{equation}
\label{8}
\vert 1-
h\big(u_p(\tau, x)/p\big)
\vert^s
\vert
\sigma(\tau, x, u(\tau, x)
, \mathcal{V}(\tau, x))\vert_1^s
\leq
\vert
\sigma(\tau, x, u(\tau, x)
, \mathcal{V}(\tau, x))\vert_1^s,
\end{equation}
with \(\int_{\Gamma_T}
\vert\sigma(\tau, x, u(\tau, x)
, \mathcal{V}(\tau, x))\vert_1^s
\dd\mu \dd\tau
< \infty\) (see Corollary \ref{fluxls}).
From \eqref{7}, \eqref{8} and Lebesgue theorem,
we derive that \(J_{p, 2}\) tends to \(0\).
With \eqref{6}, \eqref{5'} and \eqref{klou}, this proves \eqref{4}.\\
Now, since the function
\(u_p\) is a solution of the problem 
(\(\mathcal{P}_{T}\)) with \(\sigma_p\)
in place of \(\sigma\), using 
\eqref{3} and \eqref{4} we see that
\begin{equation}
\label{9}
\begin{aligned}
\int_0^T
\Bigl{\langle}
\dfrac{\dd u_p(\tau)}{\dd t}, w(\tau)
\Bigr{\rangle} \dd\tau
\underset{p\rightarrow \infty}{\longrightarrow}&
\int_{\mathcal{Q}_T}
\big(\nabla u+u_{\alpha}\otimes
\nabla\mathcal{V}\big):\nabla w
\dd x\dd\tau\\&
+
\int_{\Gamma_T}
\sigma(\tau, x, u(\tau, x)
, \mathcal{V}(\tau, x))\cdot w(x)
\dd\mu \dd\tau.
\end{aligned}
\end{equation}
Starting with \(t\in [0, T]\) in place of \(T\),
we conclude that \eqref{9} holds true for any \(t\in [0, T]\).
Restricting to
\(w\in \C^1([0, T]\times\Omega)\) (see 
\eqref{btz}), we have, for any \(t\in (0, T)\) 
\begin{equation}
\label{mou}
\begin{aligned}
&\int_0^t
\Bigl{\langle}
\frac{\dd u_p}{\dd t}(\tau), w(\tau)
\Bigr{\rangle} \dd\tau
=
-\int_{\mathcal{Q}_t}
u_p\cdot\partial_tw
 \dd x\dd\tau
  -\int_{\Omega}u_0\cdot w(0)\dd x+\int_{\Omega}u_p(t)\cdot w(t)\dd x.
 \end{aligned}
 \end{equation}
In order to pass to the limit in \eqref{mou}
notice the convergence \(\int_{\Omega}u_p\cdot w\dd x
\underset{p\rightarrow \infty}{\longrightarrow}\int_{\Omega}u\cdot w\dd x\)
in \(\L^1(0, T)\), consequence of 
\(u_p\underset{p\rightarrow \infty}{\longrightarrow} u\) in \(\L^2(\mathcal{Q}_T)\) 
(see Lemma \ref{AubinLions}).
Hence, extracting if necessary a subsequence, we get
\begin{equation}
\label{miel}
\int_{\Omega}u_p(t)\cdot w(t)\dd x
\underset{p\rightarrow \infty}{\longrightarrow} \int_{\Omega}u(t)\cdot w(t)\dd x
\textrm{ for a.e }t \in (0, T).
\end{equation}
Appealing to \eqref{mou}, \eqref{miel} and \eqref{cave}
with \(t\) in place of \(T\),
we obtain
\begin{equation}
\label{milou}
\int_0^t
\Bigl{\langle}
\frac{\dd u_p}{\dd t}(\tau), w(\tau)
\Bigr{\rangle} \dd\tau
\underset{p\rightarrow \infty}{\longrightarrow}
-\int_{\mathcal{Q}_t}
u\cdot\partial_tw
 \dd x\dd\tau
  -\int_{\Omega}u_0\cdot w(0)\dd x+\int_{\Omega}u(t)\cdot w(t)\dd x
\end{equation}
for a.e \(t \in (0, T)\).
Appealing to \eqref{9}, it implies that
\((\mathcal{R}_T)\) holds true. The proof 
in the case \(0\leq \rho <\frac2d\) is 
similar (see \eqref{bta}). \\

\noindent {\emph{(ii)}} \textbf{Existence and uniqueness in the case} \(\bf{0\leq \rho
<\frac1d}\). We begin with the existence part. 
From one hand, since \(u_p\underset{p\rightarrow \infty}{\longrightarrow} u\) in
\(\L^2(\mathcal{Q}_T))\), we obtain the
convergence in \(\L^2(0, T,
\mathbf{\H}^{1}(\Omega)')\).
Hence we find \(\frac{\dd u_p}{\dd t}\underset{p\rightarrow \infty}{\longrightarrow}
\frac{\dd u}{\dd t}\) in 
\(\H^{-1}(0, T,
\mathbf{\H}^{1}(\Omega)')\). On the other hand, since 
\(0\leq \rho < \frac1d\) we have
\(\mathbf{b}(T, \rho) = 
\L^2(0, T,
\mathbf{\H}^{1}(\Omega))\), and by 
\eqref{9}, \(\bigl{\{}\frac{\dd u_p}{\dd t}\bigr{\}}_{p\in \mathbb{N}^*}\) 
converge towards some \(f\) in 
\(\L^2(0, T,
\mathbf{\H}^{1}(\Omega)')\) weak--\(\star\).
By identification, \(f = \frac{\dd u}{\dd t}\), and
the variational existence part follows from \eqref{9}. Next, \(u\in \C^0(0, T,
\mathbf{L}^2(\Omega))\) follows
classically from \(u\in \L^2(0, T,
\mathbf{H}^1(\Omega))\) and 
\(\frac{\dd u}{\dd t}\in \L^2(0, T,
\mathbf{H}^1(\Omega)')\). Last, writing 
\(u_p(0) = u_0\) and using \eqref{cuve}, we
get \(u(0) = u_0\).

For the uniqueness part, let \(u\in\E_T\),
 \(\bar{u}\in \E_T\) be two solutions
 of (\(\mathcal{P}_{T}\)) associated 
 with the same initial data. Denote by
 \(\mathcal{V}\) and 
 \(\bar{\mathcal{V}}\) the associated potentials.
 Notice that
 \(u-\bar{u}\in 
 \E_T\subset\mathbf{b}(T, \rho)\)
 by \eqref{bta}.
 Using \((\mathcal{P}_T)\), we derive the
 following energy estimate
 \begin{equation*}
 \begin{aligned}
&\norm[\E_t]{u-\bar{u}}^2
\leq C_{R, T}
\norm[\L^2(\mathcal{Q}_t)]{u-\bar{u}}^2\\
&+ C_{R, T}
\int_{\Gamma_t}
\vert
\sigma(\tau, x, u(\tau, x),
\mathcal{V}(\tau, x))
-
\sigma(\tau, x, \bar{u}(\tau, x),
\mathcal{V}(\tau, x))
\vert_1
\vert
u-\bar{u}
\vert_1
\dd\mu \dd\tau.
 \end{aligned}
 \end{equation*}
 Appealing to Corollary \ref{fluxls} \emph{(ii)}, we get, for
 \(\eta>0\) small enough, we obtain
 \begin{equation*}
 \norm[\E_t]{u-\bar{u}}^2
\leq C_{R, T}
\norm[\L^2(\mathcal{Q}_t)]{u-\bar{u}}^2,
\end{equation*}
and uniqueness result follows.
 \end{proof}
   \begin{remark}   
   Theorem \ref{Thetheorem}
   holds true with a general diffusion term 
   \((\eta_1 \Delta u_1, \cdots, \eta_n \Delta u_n)^{\tra}\)in place 
   of \(\Delta u\) (\(\eta_i>0\)). This will  implicitely
   be used in Section 
   \ref{zamples}
    \end{remark}

\section{Examples}
\label{zamples}

In this concluding section, we illustrate our setting by some realistic equations.
More precisely, we focus below on two examples coming from 
drift-diffusion and self-gravitation models. The first example deals with a 
drift-diffusion system with Robin boundary condition with application to a corrosion
model and the well-posedness follows from direct application of Theorem \ref{Thetheorem}.
In the second example,  the Theorem 
\ref{Thetheorem} is used in the proof as a mollifying frame. This
method could be used for more complicated systems.

\subsection{The drift-diffusion system coming from a corrosion model}

We consider first a drift-diffusion system subjected to Robin boundary conditions.  
Under appropriate assumptions on the data, existence of solutions comes
from Theorem~\ref{Thetheorem}. Then we illustrate this general setting with more
specific model, namely the corrosion modeling in a nuclear waste
repository (cf.~\cite{BaBoC*12NMSC,CHAVI}).

\subsubsection{The drift-diffusion system}
Assume that for any \(i\in\{1,\ldots, n\}\), \(\alpha_i\in\Er\), \(\beta_i\in\Er\), \(\theta_i\) 
a Borel measure on \([a,b]\) (\((a,b)\in\Er^2\)) and \(u_0^i\in\L^2(0,1)\) with \(u_0^i\geq 0\), 
\((\mathcal{A}_0,\mathcal{A}_1,\mathcal{V}_0,\mathcal{V}_1)\in\Er^4\)
and \(\zeta>0\).
Let us define \(f^i\in\C^1(\partial\Omega\times\Er\times [a,b])\) and 
\(g^i\in\C^1(\Er\times [a,b])\) satisfying the following assumptions:
\begin{enumerate}[(D--1)]
\itemsep0.2em

\item \(\forall (x,\phi,s)\in [0,1]\times\Er\times [a,b]:\; f^i(x,\phi,s)\leq 0\),

\item \(\exists R\in [0,\infty), \forall (v,s)\in[R,\infty)\times[a,b]:\; g^i(v,s)\geq 0\),

\item \(\forall (v,s)\in (-\infty,0]\times [a,b]:\; g^i(v,s)\leq 0\),

\item \(\forall (v,\bar{v}, s)\in \Er\times\Er\times [a,b]:\; 
\abs{g^i(v,s)-g^i(\bar{v},s)}\leq K(1+\abs{v}^{\rho}+\abs{\bar{v}}^{\rho})\abs{v-\bar{v}}\) for some
\(\rho\in[0,3)\).

\end{enumerate}
Set
\begin{equation}
\forall i\in\{1,\ldots, n\}:\; 
\sigma^i(t,x,v,\phi)\eqldef \int_{[a,b]} f^i(x,\phi,s)g^i(v,s)\dd\theta_{i}(s). 
\end{equation}
We consider the following drift-diffusion system:
\begin{subequations}
\label{drift_1}
\begin{align}
\forall i\in\{1,\ldots,n\}:\;
\partial_t u^i=\partial_x(\partial_xu^i+\alpha_i(u^i\partial_x \mathcal{V})),\quad(t,x)\in
(0,T)\times(0,1),\\
\partial_{xx}\mathcal{V}=\sum_{i=1}^n\beta^i u^i+\zeta,\quad(t,x)\in(0,T)\times(0,1),
\end{align}
\end{subequations}
together with boundary conditions
\begin{subequations}
\label{drift_2}
\begin{align}
\forall i\in\{1,\ldots,n\}:\;
-\big(\partial_x u^i+\alpha_i u^i\partial_x\mathcal{V}\big)(t,0)=\sigma^i(t,0,u^i(t,0),\mathcal{V}(t,0)),
\quad t\in[0,T],\\
\forall i\in\{1,\ldots,n\}:\;
\big(\partial_x u^i+\alpha_i u^i\partial_x\mathcal{V}\big)(t,1)=\sigma^i(t,1,u^i(t,1),\mathcal{V}(t,1)),
\quad t\in[0,T],\\
\mathcal{V}(t,0)+\mathcal{A}_0\partial_x\mathcal{V}(t,0)=\mathcal{V}_0,\quad t\in[0,T],\\
\mathcal{V}(t,1)+\mathcal{A}_1\partial_x\mathcal{V}(t,1)=\mathcal{V}_1,\quad t\in[0,T],
\end{align}
\end{subequations}
and initial conditions
\begin{equation}
\label{drift_3}
\forall i\in\{1,\ldots,n\}:\; u^i(0,x)=u_0^i,\quad x\in(0,1).
\end{equation}
We suppose now that \(1+\mathcal{A}_1-\mathcal{A}_0\neq 0\) and \(\varphi\in\L^1(0,1)\). 
Then the following problem:
\begin{equation}
\label{eq:16}
\begin{cases}
\text{Find }\mathcal{V}\in\W^{2,1}(0,1)\text{ such that }\\
\partial_{xx}\mathcal{V}=\varphi,\\
(\mathcal{V}+\mathcal{A}_0 \partial_x\mathcal{V})(\cdot,0)=\mathcal{V}_0
\quad\text{and}\quad(\mathcal{V}+\mathcal{A}_1 \partial_x\mathcal{V})(\cdot,1)=\mathcal{V}_1,
\end{cases}
\end{equation}
admits exactly one solution given by
\begin{equation}
\label{eq:17}
\mathcal{V}(x)=\int_0^1 G(x,y)\varphi(y)\dd y+\frac{x-\mathcal{A}_0}{1+\mathcal{A}_1-
\mathcal{A}_0}(\mathcal{V}_1-\mathcal{V}_0)+\mathcal{V}_0,
\end{equation}
where \(G\in\L^{\infty}(0,1;\W^{1,\infty}(0,1))\cap \C^0([0,1]\times[0,1])\)
is the Green kernel associated with problem \eqref{eq:16}.
We may observe that the function \(G\) is defined as follows:
\begin{equation*}
G(x,y)\eqldef\frac{(1+\mathcal{A}_1-x)(\mathcal{A}_0-y)}{1+\mathcal{A}_1-
\mathcal{A}_0}
\end{equation*}
for \(0\leq y\leq x\leq 1\) and \(G(x,y)=G(y,x)\)
for \(0\leq x\leq y\leq 1\). Notice that (A--2) and (A--4) follow from \eqref{eq:17} while
(A--1) comes from the assumptions (D--1)--(D--3).
Since it is quite a routine to verify that (A--1), (A--2) and (A--4) holds
true, the verification is let to the reader. Consequently, we may deduce from  
Theorem~\ref{Thetheorem} that \eqref{drift_1}--\eqref{drift_3} admits at least one solution 
\(u\eqldef (u_1,\ldots, u_n)^{\tra}\)
(in the sense of \((\mathcal{R}_T)\)) belonging to 
\(\L^2(0,T;\bH^1(0,1))\cap\L^{\infty}([0,T];\bL^2(0,1))\).

\subsubsection{A corrosion model}
We illustrate our setting by an example coming from the description of the
corrosion in a nuclear waste repository. 
Let \(u^1\), \(u^2\), \(u^3\) and \(\mathcal{V}\) be the electrons and cations densities,
oxygen vacancies and electrical potential,
respectively.  Following \cite{CHAVI}, we assume that the boundary conditions on 
\(u^1\), \(u^2\) and \(u^3\) have 
exactly the same form.
Let \(\zeta\) be the density charge in the host lattice and \(\lambda\) and \(\varepsilon\)
be two nonnegative constants such that \(\varepsilon\ll 1\). Set
\begin{equation}
\forall i=1,2,3:\;
\sigma^i(t,x,v,\phi)\eqldef
-(m_x^i\mathrm{e}^{-\gamma^i b^i_x\phi}+k_x^i\mathrm{e}^{\gamma^i a_x^i\phi})u^i+
m_x^i u^i_{\max}\mathrm{e}^{-\gamma^i b_x^i\phi},
\end{equation}
where \(m_x^i>0\), \(k_x^i>0\), \(a_x^i\in[0,1]\), \(b_x^i\in[0,1]\) and \(u_{\max}^i>0\) with
\(i=1,\ldots, 3\) 
and \(\gamma^1=-1\), \(\gamma^2=3\) and \(\gamma^3=1\).
The mathematical problem is formulated as follows:
\begin{subequations}
\label{Cor_1}
\begin{align}
\label{Cor_1_1}
\forall i=1,2,3:\;
\varepsilon^{2-i} \partial_t u^i=\partial_x(\partial_x u^i+\gamma^i
u^i\partial_x\mathcal{V}),\quad (t,x)\in\Er_*^+\times(0,1),\\
 \label{Cor_1_3}
-\lambda\partial_{xx}\mathcal{V}=\gamma^1 u^1+\gamma^2u^2
+\gamma^3u^3
+\zeta,
\quad (t,x)\in\Er_*^+\times(0,1).
\end{align}
\end{subequations}
The system \eqref{Cor_1} is endowed with the following boundary conditions
\begin{subequations}
\label{Cor_2}
\begin{align}
\label{Cor_2_1}
\forall i=1,2,3:\;
-(\partial_xu^i+\gamma^iu^i\partial_x\mathcal{V})(t,0)=\sigma^i(t,0,u^i(t,0),\mathcal{V}(t,0)),
\quad t\in\Er_*^+,\\
\label{Cor_2_4}
\forall i=1,2,3:\;
(\partial_x u^i+\gamma^iu^i\partial_x\mathcal{V})(t,1)=\sigma^i(t,1,u^i(t,1),\Psi-\mathcal{V}(t,1)),
\quad t\in\Er_*^+,\\
\label{Cor_2_3}
(\mathcal{V}-\mathcal{A}_0\partial_x\mathcal{V})(t,0)=\Delta\mathcal{V}_0,
\quad t\in\Er_*^+,\\
\label{Cor_2_6}
(\mathcal{V}-\mathcal{A}_1\partial_x\mathcal{V})(t,1)=\Psi-
\Delta\mathcal{V}_1,\quad t\in\Er_*^+,
\end{align}
\end{subequations}
and the following initial conditions
\begin{equation}
\label{Cor_3}
\forall i=1,2,3 :\; u^i(0,x)=u_0^i,\quad x\in(0,1).
\end{equation}
Here \(\Psi\) denotes a given applied potential, \(\Delta\mathcal{V}_i\) 
are the voltage drop parameters and \((\mathcal{A}_0, \mathcal{A}_1)\in \Er^2\). 
Furthermore, for any \(i\in\{1,2,3\}\), \(u_0^i\geq 0\) belongs to \(\L^2(0,1)\). 
For further explanations on this model, the reader is referred
to~\cite{BaBoC*12NMSC} as well as to the references therein.
Appealing to Theorem~\ref{Thetheorem} (or see Subsection \ref{lasection}),
we infer that \eqref{Cor_1}--\eqref{Cor_3} 
possesses exactly one solution \(u=(u^1,u^2,u^3)^{\tra}\) 
belonging to \(\L^2(0,T;\bH^1(0,1))\cap\L^{\infty}(0,T;\bL^2(0,1))\) for any \(T>0\).
In contrast with the former existence result present in \cite{CHAVI} \((n=2)\), our 
result holds true for \(n=3\) and even for an arbitrary number of species (\(n\in\En^*\)). 
Furthermore, the conditions on the {\emph{voltage drops}} and {\emph{other structural 
coefficients}} in Theorem 1.1 in \cite{CHAVI} have been removed.

Finally, as quoted above, we have assumed that the boundary conditions 
on \(u^1\), \(u^2\) and \(u^3\) have 
the same form.
Nevertheless, in the original
corrosion system depicted in
\cite{BaBoC*12NMSC}, this is not the case.
As easily verified, the second boundary
condition given therein
does not meet our assumption
(A--3). In consequence, it is 
unclear to us whether this
boundary condition is mathematically
sound.

\subsection{The self-gravitational system}
We consider the self-gravitational system described in \cite{BilNad02GEGS}.
Let \(u(t, x)\) be the evolution density  of identical attracting particles and 
\(\mathcal{V}(t, x)\) be the gravitational potential. The mathematical problem can be written
as follows:
\begin{subequations}
\begin{align}
\partial_t u=\nabla \cdot(\nabla u
+u \nabla\mathcal{V}),&\quad (t,x) \in(0,T)\times\Omega,\label{zample1}
\\
\Delta \mathcal{V} = u,&\quad (t,x) \in(0,T)\times\Omega,\label{zample2}
\end{align}
\end{subequations}
together with the boundary conditions on \((0, T)\times\Bd\)
\begin{subequations}
\begin{align}
 \label{zample3}
\frac{\partial u}{\partial \nu}
 +u
 \frac{\partial\mathcal{V}}{\partial \nu}
 = 0, &\quad(t,x)\in(0, T)\times\Bd,\\
\label{zample4}
\mathcal{V}= 0, &\quad(t,x)\in(0, T)\times\Bd, 
\end{align}
\end{subequations}
and with initial data
\begin{equation}
\label{zample1}
u(0) =u_0.
\end{equation}
Observe that the above system corresponds to \(n=1\), \(\sigma = 0\), \(k^i = 0\) and 
\(\Lambda_T = 0\).
In the sequel, we restrict to the case \(d=2\), and derive as in \cite{BilNad02GEGS} 
an existence result for a small \(\L^2\) initial data. The proof relies on 
a \(\L^2\) estimate on the function \(u\). Since we can use the Theorem \ref{Thetheorem} 
in order to get a global existence result for a mollified
system, it is enough to prove that the crucial \(\L^2\) estimate
holds uniformly true for the family of approximate solutions.

Remark that the resolvent \(\mathcal{B} = \Delta_D^{-1}\) of the Poisson-Dirichlet
problem on \((0, T)\times \Omega\), namely
\(\Delta(\mathcal{B}(f))= f\) and \(\mathcal{B}(f)_{\vert_{\Bd}} = 0\), 
do not fulfill the 
\(\L^1\) or the \(\W^{1, \infty}\) condition in (A--1).
Nevertheless, \(\mathcal{B}\) defines a continuous operator in
\(\L^p(\Omega)\rightarrow \W^{2, p}(\Omega)\) for any
\(1<p< \infty\). We regularize the operator \(\mathcal{B}\) in the
following way. Let \(\phi\in\mathscr{D}(\mathbb{R})\) be a density 
probability. For any \(p\in\mathbb{N}^*\), set \(\phi_p(x)=p\phi(px)\), \(x\in\Er\). 
Let also \(E\) be the extension by zero operator. 
Notice that 
\(E: \L^1(\Omega)\rightarrow\L^1(\mathbb{R}^2)\) is continuous. 
Moreover,
\(Ev\geq 0\) for any positive \(v\in \L^1(\Omega)\).
For  \(p\in\mathbb{N}^*\), define \(\mathcal{B}_p\) on \(v\in\L^1(\Omega)\) 
by \(\mathcal{B}_p(v) = \mathcal{B}((Ev\star\phi_p)_{\vert_{\Omega}})\).
As easily checked, for any \(1<q<\infty\),  \(\mathcal{B}_p: \L^1(\Omega)
\rightarrow \W^{2, q}(\Omega)\) continuously. In fact, for any \(v\in
\L^1(\Omega)\), we have
\begin{equation}
\label{bat}
\begin{aligned}
\norm[\W^{2, q}(\Omega)]{\mathcal{B}_{p}(v)}
&\leq C_{q}
\norm[\L^{q}(\Omega)]{(Ev\star\phi_{p})_{\vert_{\Omega}}}
\leq C_{q}
\norm[\L^{q}(\mathbb{R}^2)]{Ev\star\phi_{p}}\nonumber\\
&\leq C_{q}
\norm[\L^{1}(\mathbb{R}^2)]{Ev}
\norm[\L^{q}(\mathbb{R}^2)]{\phi_p}
\leq C_{p, q}
\norm[\L^{1}(\Omega)]{v}.
\end{aligned}
\end{equation}
From H\"older inequality and Sobolev embeddings, it follows
from this \(\L^1-\W^{2, q}\) continuity that the operator
\(\mathcal{B}_p\) satisfies the condition (A--1).
Therefore (see Theorem \ref{Thetheorem}),
for any \(p\in \mathbb{N}^*\), we can define \(u_p\in\E_T\)
as the solution of \((\mathcal{P}_T)\). It means that, 
\(\frac{\mathrm{d}u_p}{\mathrm{dt}}\in\L^2(0,T;(\bH^1(\Omega))') \), 
 \(u_p(0) = u_0\) and,  
for any  
\( w\in\L^{2}(0,T;\bH^1(\Omega))\)
\begin{equation}
\label{PTQ}
\begin{aligned}
&\int_0^T\Bigl{\langle}
\frac{{\dd}u_p}{{\dd}\tau}(\tau),w(\tau)\Bigr{\rangle}\dd\tau
+\int_{\mathcal{Q}_T}(\nabla u_p+
u_{p}\nabla\mathcal{V}_p)
(\tau, x)\cdot\nabla w(\tau, x)\dd xd\tau
\end{aligned}
\end{equation}
with \(\mathcal{V}_p(t) = 
\mathcal{B}_p(t, u_p(t))\) for a.e 
\(t\in(0, T)\).
\begin{theorem}
\label{example2}
Let \(\Omega\) be a smooth, bounded domain of \(\Er^2\). 
There exists \(\eta>0\)
such that, for any \(T>0\), any \(u_0\in\L^2(\Omega;\Er)\) with \(u_0\geq 0\)
and \(\norm[\L^1(\Omega)]{u_0}\leq\eta\), the problem
\begin{equation*}
\begin{cases}
\text{Find }u\in\C^0([0, T], \bL^2(\Omega))
\cap \L^2(0,T;\bH^1(\Omega))
\text{ with }
\frac{\mathrm{d}u}{\mathrm{dt}}\in\L^2(0,T;(\bH^1(\Omega))')\\ 
\text{ such that, for any }
w\in\L^{2}(0,T;\bH^1(\Omega)):\\
{\displaystyle{\int_0^T\Bigl{\langle}
\frac{{\dd}u}{{\dd}\tau}(\tau),w(\tau)\Bigr{\rangle}\dd\tau
+\int_{\mathcal{Q}_T}
(\nabla u+u\nabla\mathcal V)\cdot\nabla w\dd x\dd\tau}=0}\\
\text{with }\mathcal{V}\eqldef \Delta_D^{-1} u\text{ and }u(0)=u_0,
\end{cases}
\end{equation*}
admits at least one solution.
\end{theorem}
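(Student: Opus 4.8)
The plan is to derive a bound on $\norm[\E_T]{u_p}$ that is uniform in $p$ and then to pass to the limit in the weak formulation \eqref{PTQ}, the only genuinely new ingredient being the uniform $\L^2$ energy estimate, which has to be closed by means of the smallness of $\norm[\L^1(\Omega)]{u_0}$. I begin with two preliminary observations. Since here $\sigma\equiv0$, choosing the constant test function $w\equiv1\in\bH^1(\Omega)$ in \eqref{PTQ} gives $\int_\Omega u_p(t)\,\dd x=\int_\Omega u_0\,\dd x$; as $u_p\geq0$ (Corollary~\ref{GE:Cor1}), this is the mass conservation $\norm[\L^1(\Omega)]{u_p(t)}=\norm[\L^1(\Omega)]{u_0}\leq\eta$ for every $t\in[0,T]$. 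Next, for fixed $p$ the source $f_p\eqldef(Eu_p\star\phi_p)_{\vert_{\Omega}}$ is nonnegative and smooth in space, so $\mathcal{V}_p=\mathcal{B}(f_p)$ solves $\Delta\mathcal{V}_p=f_p\geq0$ in $\Omega$ with $\mathcal{V}_p=0$ on $\partial\Omega$; the maximum principle forces $\mathcal{V}_p\leq0$ in $\Omega$, hence $\frac{\partial\mathcal{V}_p}{\partial\nu}\geq0$ on $\partial\Omega$.

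Testing \eqref{PTQ} with $w=u_p$ (admissible, since $u_p\in\L^2(0,T;\bH^1(\Omega))$) and integrating the drift term by parts via $u_p\nabla u_p=\frac12\nabla(u_p^2)$ --- which is legitimate for each fixed $p$ because $\nabla\mathcal{V}_p$ is then bounded --- yields
\[
\frac12\norm[\L^2(\Omega)]{u_p(t)}^2+\norm[\L^2(\mathcal{Q}_t)]{\nabla u_p}^2
=\frac12\norm[\L^2(\Omega)]{u_0}^2
+\frac12\int_{\mathcal{Q}_t}u_p^2\,\Delta\mathcal{V}_p\,\dd x\dd\tau
-\frac12\int_{\Gamma_t}u_p^2\,\frac{\partial\mathcal{V}_p}{\partial\nu}\,\dd\mu\dd\tau.
\]
By the previous step the boundary integral is nonnegative and may be dropped, while $\Delta\mathcal{V}_p=f_p$. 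The decisive point is that, although the constants inside $\mathcal{B}_p$ blow up as $p\to\infty$, Young's convolution inequality (with $\norm[\L^1(\mathbb{R}^2)]{\phi_p}=1$) gives the $p$-uniform bound $\norm[\L^3(\Omega)]{f_p}\leq\norm[\L^3(\Omega)]{u_p}$, whence $\int_\Omega u_p^2 f_p\,\dd x\leq\norm[\L^3(\Omega)]{u_p}^2\,\norm[\L^3(\Omega)]{f_p}\leq\norm[\L^3(\Omega)]{u_p}^3$. The two-dimensional Gagliardo--Nirenberg inequality
\[
\norm[\L^3(\Omega)]{v}^3\leq C\,\norm[\L^1(\Omega)]{v}\,\norm[\L^2(\Omega)]{\nabla v}^2+C\,\norm[\L^1(\Omega)]{v}^3,
\]
combined with mass conservation, then produces $\int_{\mathcal{Q}_t}u_p^2 f_p\,\dd x\dd\tau\leq C\eta\,\norm[\L^2(\mathcal{Q}_t)]{\nabla u_p}^2+C\eta^3T$.

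Choosing $\eta$ small enough (depending only on $\Omega$) that $C\eta\leq1$, I absorb the gradient term into the left-hand side and obtain $\norm[\E_T]{u_p}^2\leq C(\norm[\L^2(\Omega)]{u_0}^2+\eta^3T)\eqldef R^2$, uniformly in $p$. From here the reasoning parallels Lemma~\ref{AubinLions}. Since $\norm[\L^2(\Omega)]{f_p(t)}\leq\norm[\L^2(\Omega)]{u_p(t)}$, elliptic regularity for $\Delta_D^{-1}$ bounds $\mathcal{V}_p$ in $\L^{\infty}(0,T;\H^2(\Omega))$, so $\nabla\mathcal{V}_p$ is bounded in $\L^{\infty}(0,T;\L^q(\Omega))$ for every $q<\infty$; together with the parabolic embedding $\E_T\inj\L^4(\mathcal{Q}_T)$ this shows $u_p\nabla\mathcal{V}_p$ bounded in $\L^2(\mathcal{Q}_T)$ and hence $\{\frac{\dd u_p}{\dd t}\}$ bounded in $\L^2(0,T;\bH^1(\Omega)')$. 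The Aubin--Lions lemma then provides a subsequence with $u_p\to u$ strongly in $\L^2(\mathcal{Q}_T)$, weakly in $\L^2(0,T;\bH^1(\Omega))$ and weakly-$\star$ in $\L^{\infty}(0,T;\bL^2(\Omega))$, with $u\geq0$.

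It remains to identify the limit. Strong $\L^2$ convergence of $u_p$ and the mollifier property give $f_p\to u$ in $\L^2(\mathcal{Q}_T)$, so $\mathcal{V}_p=\mathcal{B}(f_p)\to\Delta_D^{-1}u=:\mathcal{V}$ in $\L^2(0,T;\H^2(\Omega))$ by continuity of $\Delta_D^{-1}\colon\L^2\to\H^2$; in particular $\nabla\mathcal{V}_p\to\nabla\mathcal{V}$ in $\L^2(0,T;\H^1(\Omega))$. Passing to the limit in \eqref{PTQ}, the diffusion term converges by weak convergence of $\nabla u_p$, and the drift term converges because $u_p\nabla\mathcal{V}_p\to u\nabla\mathcal{V}$ in $\L^2(\mathcal{Q}_T)$ (using the uniform $\L^4(\mathcal{Q}_T)$ bound on $u_p$ and the $\L^{\infty}(0,T;\L^q(\Omega))$ bound on $\nabla\mathcal{V}_p$), while the time-derivative term and the initial condition $u(0)=u_0$ are handled as in Theorem~\ref{Thetheorem}; this shows that $u$ solves the stated problem with $\mathcal{V}=\Delta_D^{-1}u$. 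I expect the main obstacle to be the energy estimate of the second paragraph, namely the simultaneous use of the maximum-principle sign of the boundary flux, the $p$-uniform bound $\norm[\L^3(\Omega)]{f_p}\leq\norm[\L^3(\Omega)]{u_p}$, and the critical two-dimensional Gagliardo--Nirenberg inequality to close the estimate by absorption under the smallness hypothesis on $\norm[\L^1(\Omega)]{u_0}$.
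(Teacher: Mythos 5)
Your proposal is correct and follows essentially the same route as the paper: mollify $\Delta_D^{-1}$, apply the general existence theorem to get $u_p$, close the uniform energy estimate by combining the sign of $\partial_\nu\mathcal{V}_p$ from the maximum principle, Young's convolution inequality $\norm[\L^3]{Eu_p\star\phi_p}\leq\norm[\L^3]{u_p}$, the two-dimensional Gagliardo--Nirenberg inequality and the smallness of $\norm[\L^1(\Omega)]{u_0}$, then pass to the limit by Aubin--Lions. The only (harmless) deviations are that you obtain the $\L^1$ bound by testing with $w\equiv 1$ rather than invoking the estimate \eqref{eq:GE4_1} with $k^i=0$, $\sigma=0$, $\Lambda_T=0$, and that your form of Gagliardo--Nirenberg carries the lower-order term $C\norm[\L^1]{v}^3$ so that the absorption closes without the Gr\"onwall step used in the paper.
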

\begin{proof}
We mainly have to prove that the above sequence \(\{u_p\}_{p\in\En^*}\) is bounded in
\(E_T\). Since \(u_p\) satisfies the formulation \((\mathcal{P}_T)\), taking
\(w=u_p\) as a test function (\(p\in\En^*\)), we get 
\begin{equation}
\label{Ex2_1}
\frac12\norm[\L^2(\Omega)]{u_p(t)}^2+\norm[\L^2(0,T;\L^2(\Omega))]{\nabla u_p}^2
\leq\frac12\norm[\L^2(\Omega)]{u_0}^2-\int_{\mathcal{Q}_t}u_p\nabla\mathcal{V}_p
\cdot\nabla u_p\dd x\dd \tau.
\end{equation}
Notice that
\begin{equation}
\label{Ex2_2}
\int_{\mathcal{Q}_t}u_p\nabla\mathcal{V}_p\cdot\nabla u_p\dd x\dd\tau=
-\frac12\int_{\mathcal{Q}_t}u_p^2\Delta\mathcal{V}_p\dd x\dd\tau
+\frac12\int_{\mathcal{Q}_t} u_p^2\frac{\partial\mathcal{V}_p}{\partial \nu}\dd\mu\dd \tau.
\end{equation}
In order to remove the second term
in the right hand side of equality
\eqref{Ex2_2}, remark that
\(u_p\geq 0\). Hence, \(E u_p\geq 0\) so that \(\Delta\mathcal{V}_p=
(Eu_p\star \phi_p)_{|_{\Omega}}\geq 0\). Recalling the equality 
\({\mathcal{V}_p}_{|_{\partial\Omega}}=0\), we therefore obtain 
\(\frac{\partial\mathcal{V}_p}{\partial \nu}\geq 0\).
Finally, according to \eqref{Ex2_1} and \eqref{Ex2_2} and the definition 
of \(\mathcal{V}_p\), this leads to
\begin{equation}
\label{Ex2_3}
\frac12\norm[\L^2(\Omega)]{u_p(t)}^2+\norm[\L^2(0,t;\bfL^2(\Omega))]{\nabla u_p}^2
\leq
\frac12\norm[\L^2(\Omega)]{u_0}^2+\int_{\mathcal{Q}_t}u_p^2(Eu_p\star \phi_p)\dd x\dd\tau.
\end{equation}
It remains to estimate the last term on the right hand side of \eqref{Ex2_3}.  To this aim, 
we use the H\"older and the convolution inequalities to get
\begin{equation}
\label{Ex2_4}
\int_{\mathcal{Q}_t}u_p^2(Eu_p\star \phi_p)\dd x\dd\tau
\leq \norm[\L^3({\mathcal{Q}_t})]{u_p}^2\norm[\L^3({\mathcal{Q}_t})]{Eu_p\star\phi_p}
\leq\norm[\L^3({\mathcal{Q}_t})]{u_p}^3.
\end{equation}
By using \eqref{Ex2_4} and Gagliardo-Nirenberg inequality, it follows that there exists
a constant \(C_{\text{GN}}>0\) independent of \(p\in\En\) such that 
\begin{equation}
\label{Ex2_5}
\int_{\mathcal{Q}_t}u_p^2(Eu_p\star \phi_p)\dd x\dd\tau
\leq C_{\text{GN}}\int_0^t\norm[\L^1(\Omega)]{u_p(\tau)}\norm[\H^1(\Omega)]{u_p(\tau)}^2\dd\tau.
\end{equation}
Appealing to \eqref{eq:GE4_1} with \(k^i=0\), \(\sigma=0\) and \(\Lambda_T=0\), we see that
\begin{equation}
\label{Ex2_6}
\forall \tau\in[0,t]:\;\norm[\L^1(\Omega)]{u_p(\tau)}\leq \norm[\L^1(\Omega)]{u_0}.
\end{equation}
Hence \eqref{Ex2_3}--\eqref{Ex2_6} leads to
\begin{equation}
\label{Ex2_7}
\frac12\norm[\L^2(\Omega)]{u_p(t)}^2+\norm[\L^2(0,t;\bfL^2(\Omega))]{\nabla u_p}^2
\leq
\frac12\norm[\L^2(\Omega)]{u_0}^2+ C_{\text{GN}}\norm[\L^1(\Omega)]{u_0}
\int_0^t\norm[\H^1(\Omega)]{u_p(\tau)}^2\dd\tau.
\end{equation}
For \(u_0\in\L^2(\Omega)\) with
\(\norm[\L^1(\Omega)]{u_0}\leq\frac1{2C_{\text{GN}}}\), we deduce from \eqref{Ex2_7} that
\begin{equation}
\label{Ex2_8}
\norm[\L^2(\Omega)]{u_p(t)}^2+\norm[\L^2(0,t;\bfL^2(\Omega))]{\nabla u_p}^2
\leq
\norm[\L^2(\Omega)]{u_0}^2+ \int_0^t\norm[\L^2(\Omega)]{u_p(\tau)}^2\dd\tau.
\end{equation}
From \eqref{Ex2_8} and Gr\"onwall's lemma, we conclude that 
\begin{equation}
\label{fot1}
\{u_p\}_{p\in\En^*}
\textrm{ is bounded in } E_T\inj \L^{2}(0, T, \L^r(\Omega))
\end{equation}
for any \(1\leq r< \infty\).
Extracting
if necessary a subsequence, \eqref{fot1}
gives
\begin{equation}
\label{fot1501}
\nabla u_p 
\underset{p\rightarrow \infty}{\longrightarrow}
 \nabla u
 \text{ weakly in } \L^2(0, T, \bfL^2(\Omega)).
\end{equation}
Since \(\mathcal{V}_p(t) = 
\Delta_{D}^{-1}\big((Eu_p\star \phi_p)_{|_{\partial\Omega}}\big)\), \eqref{fot1}
and Lemma \ref{Lm4} leads to
\begin{equation}
\label{fot3}
\{\nabla\mathcal{V}_p\}_{p
\in\mathbb{N}^*} \textrm{ is  bounded
in }\L^{\infty}(0, T, \H^1(\Omega))
\inj \L^{\infty}(0, T, \L^r(\Omega))
\end{equation}
for any \(1\leq r< \infty\). 
Now, due to \eqref{PTQ},
\eqref{fot1}, and \eqref{fot3}
we also obtain that 
\(\bigl{\{}\frac{\dd u_p}{\dd t}\bigr{\}}_{p\in\En^*}\) 
is bounded in \(\L^2(0,T;\H^{-1}(\Omega))\).
By the Aubin-Lions lemma, extracting if necessary a subsequence, we conclude the 
existence of \(u\in\L^2(0,T;\L^2(\Omega))\) such that
\begin{equation*}
u_p \underset{p\rightarrow \infty}{\longrightarrow} u
\quad\text{in}\quad\L^2(\mathcal{Q}_T).
\end{equation*}
and we can moreover assume that \(u_p(t) \underset{p\rightarrow \infty}{\longrightarrow}
u(t)\) for a.e \(t\in(0, T)\) and   
\(\sup_{p\in\mathbb{N}^*}\norm[\L^2(\Omega)]{u_p}\in \L^2(0, T)\).  
It follows easily
that 
\begin{equation}
\label{fot4}
\nabla\mathcal{V}_p\underset{p\rightarrow \infty}{\longrightarrow}
 \nabla\Delta_{D}^{-1}u
\in \L^{2}(\mathcal{Q}_T).
\end{equation}
From 
\eqref{PTQ},
\eqref{fot1}, and \eqref{fot3} 
we also have that 
\(\bigl{\{}\frac{\dd u_p}{\dd t}\bigr{\}}_{p\in\En^*}\) 
is bounded in \(\L^2(0,T;(\bH^{1}(\Omega))')\),
hence, up to a subsequence, weakly--\(\star\) convergent 
in \(\L^2(0,T;(\bH^{1}(\Omega))')\).
With \eqref{fot1}, 
\eqref{fot1501}, \eqref{fot3},\eqref{fot4},
we may conclude that \(u\) satisfies the variational formulation in
\(\mathcal{P}_T\) with test functions
\(w\in \C^{\infty}([0, T]\times \bar{\Omega})\). By density, this holds
true for
\(w\in \L^{2}(0, T, \mathbf{H}^1({\Omega}))\).
The end of the proof is omitted.
\end{proof}

\renewcommand{\arraystretch}{0.91}{\small{ 
\paragraph*{Acknowledgments} }}

\end{document}